\documentclass[12pt]{amsart}

\usepackage[top=16mm, bottom=16mm, left=44.45mm, right=44.45mm]{geometry}

\usepackage{indentfirst}
\allowdisplaybreaks

\usepackage[pagewise]{lineno}\nolinenumbers

\usepackage{amsmath, amsfonts, amssymb, amsthm}
\usepackage{amsrefs}
\usepackage{mathrsfs}
\usepackage{enumitem} 
\usepackage{hyperref}
\usepackage{color}
\usepackage{dsfont}
\usepackage{csquotes}
\usepackage{epigraph}
\usepackage{comment}
\usepackage{mlmodern}
\usepackage[T1]{fontenc}

\usepackage{color}
\definecolor{darkblue}{rgb}{0.0,0.0,0.3}
\hypersetup{
    colorlinks=false,
    linkcolor=blue,
    urlcolor=darkblue,
    }

\newtheorem{theorem}{Theorem}[section]

\newtheorem{lemma}[theorem]{Lemma}

\theoremstyle{definition}

\newtheorem{definition}[theorem]{Definition}

\newtheorem{notation}[theorem]{Notation}

\theoremstyle{remark}
\newtheorem{remark}[theorem]{Remark}

\numberwithin{equation}{section}

\newcommand{\bR}{\mathbb{R}}
\newcommand{\bH}{\mathbb{H}}

\newcommand{\Garding}{G\r{a}rding}

\title[Curvature estimates]{Hypersurfaces of constant higher order mean curvature in hyperbolic space with prescribed asymptotic boundary at infinity}
\author{Bin Wang}
\address[]{Institute for Theoretical Sciences, Westlake University, Hangzhou, China} 
\thanks{This work was supported by the China Postdoctoral Science Foundation under Grant Number 2026M793406}
\email{wangbin@westlake.edu.cn}
\subjclass[2020]{Primary: 53C21; Secondary: 35J60, 53C40}
\keywords{Hessian equations, the asymptotic Plateau problem, hypersurfaces of constant curvature, fully nonlinear elliptic PDEs, a priori estimates.}

\begin{document}
\begin{abstract}
In this note, we prove the existence of smooth complete admissible hypersurfaces in hyperbolic space with constant higher order mean curvature and a prescribed asymptotic boundary at infinity. The result holds for a large part of indices in the subcritical range where the concavity inequality loses its effectiveness. We supply with new arguments to reduce the proof to a semi-convex situation in which case the concavity inequality can play a role.

\end{abstract}
\maketitle
\setcounter{tocdepth}{1} 

\tableofcontents

\section{Introduction}
Fix $n \geq 3$. Let $\bH^{n+1}$ denote the hyperbolic space of dimension $n+1$ and let $\partial_{\infty} \bH^{n+1}$ denote its ideal boundary at infinity. Suppose $f: \Gamma \subseteq \bR^n \to \bR$ is a smooth symmetric function of $n$ variables satisfying some standard assumptions on an open symmetric convex cone $\Gamma \supseteq \Gamma^{+}$ with vertex at the origin that contains the positive cone \[\Gamma^{+}:=\{\kappa \in \bR^n: \kappa_i>0 \quad \forall\ 1 \leq i \leq n\}.\]
Given a closed, embedded, smooth $(n-1)$-dimensional submanifold
\begin{equation}
\Lambda \subset \partial_{\infty} \bH^{n+1} \label{asymptotic boundary}
\end{equation}
and a constant $0<\sigma<1$, we consider the asymptotic Plateau problem of finding a smooth complete hypersurface $\Sigma$ in $\bH^{n+1}$ satisfying 
\begin{equation}
f(\kappa[\Sigma])=\sigma \quad \text{and} \quad \kappa[\Sigma] \in \Gamma \quad \text{on $\Sigma$}, \label{req1}
\end{equation}
with the asymptotic boundary
\begin{equation}
\partial_{\infty} \Sigma=\Lambda \quad \text{at infinity}. \label{req2} 
\end{equation} Here $\kappa[\Sigma]=(\kappa_1,\ldots,\kappa_n)$ denotes the hyperbolic principal curvatures of $\Sigma$. 

When $\Gamma=\Gamma^{+}$, admissible hypersurfaces are locally strictly convex. For a broad class of curvature functions $f$, the existence theory in this setting was developed by Guan, Spruck, Szapiel and Xiao \cite{JGA, SGAR, JDG} and their result in \cite{JDG} is essentially optimal. Alternative proofs may be found in \cite{Xiao-A, Smith, Hong-Li-Zhang}.

Guan and Spruck \cite{JEMS} subsequently treated the problem in a general cone $\Gamma$ and obtained the existence of solutions for $\sigma \in (\sigma_0,1)$, where $\sigma_0 \approx 0.37$. Xiao \cite{Xiao-B} refined the computations and lowered the threshold to $\sigma_0 \approx 0.14$. Further refinements of the same computations do not appear likely to reach the full interval $\sigma \in (0,1)$.

In this note, we aim to investigate the existence of solutions for all $\sigma \in (0,1)$ in the general cone $\Gamma$. 
Continuing our previous investigations \cite{Bin-MRL, Bin-Adv, Bin-PAMS} on the subject, we approach the problem by a different strategy. That is, instead of a general symmetric function $f(\kappa)$, we may consider two important families of curvature functions, which are the $k$-th order mean curvatures
\[f(\kappa)=H_{k}^{1/k}(\kappa)\] and their quotients
\[f(\kappa)=\left(\frac{H_k(\kappa)}{H_l(\kappa)}\right)^{\frac{1}{k-l}}, \quad \text{where} \quad 1 \leq l<k \leq n,\] and
\[H_k(\kappa)=\binom{n}{k}^{-1}\sigma_k(\kappa):=\binom{n}{k}^{-1}\sum_{1 \leq i_1<i_2<\cdots<i_k \leq n} \kappa_{i_1}\cdots \kappa_{i_k}\] is the normalized $k$-th elementary symmetric polynomial of principal curvatures. The natural ellipticity cone for both families is the $k$-th \Garding\ cone,
\[\Gamma=\Gamma_k:=\{\kappa \in \bR^n: H_j(\kappa)>0 \quad \forall\ 1 \leq j \leq k\}.\]

These curvature functions are the prototypical examples of fully nonlinear elliptic Weingarten curvature equations and they have been extensively studied since the seminal work \cite{CNS-3, CNS-4, CNS-5} of Caffarelli, Nirenberg, and Spruck. They are important because of their natural appearance in geometric problems. In particular, they include the following notable examples.
\[\begin{alignedat}{2}
H_1(\kappa)&=\frac{1}{n}\sum_{i=1}^{n} \kappa_i &\quad& \text{is the mean curvature},\\
H_2(\kappa)&=\frac{2}{n(n-1)}\sum_{i<j}\kappa_i\kappa_j &\quad & \text{is the scalar curvature},\\
H_n(\kappa)&=\prod_{i=1}^{n}\kappa_i &\quad &  \text{is the Gauss curvature}, \\
\left(\frac{H_n}{H_{n-1}}\right)(\kappa)&=n\left(\sum_{i=1}^{n} \frac{1}{\kappa_i}\right)^{-1} &\quad &\text{is the harmonic curvature}.
\end{alignedat}\] For other values of $k$ and $l$, these curvature functions  also arise naturally in many geometric problems. 

In the pioneering work \cite{Anderson-1, Anderson-2}, Michael.~T.~Anderson established various existence theorems for complete minimal varieties in $\bH^{n+1}$ with prescribed asymptotic boundary. In particular, Anderson \cite[Theorem 10]{Anderson-1} solved the Dirichlet problem for complete minimal graphs in $\bH^{n+1}$ over a mean-convex domain, which corresponds to our problem \eqref{the Dirichlet problem} with
\[f(\kappa)=H_1(\kappa)=0.\] The boundary regularity at infinity and the extension to the constant mean curvature equation 
\[f(\kappa)=H_{1}(\kappa)=\sigma \in (0,1)\]
were further developed by Hardt-Lin \cite{Hardt-Lin}, Lin \cite{Lin}, Tonegawa \cite{Tonegawa}, Nelli-Spruck \cite{Nelli-Spruck}, and Guan-Spruck \cite{AJM}. 

We are concerned with the natural fully nonlinear generalization,
\[f(\kappa)=H_{k}^{1/k}(\kappa)=\sigma \in (0,1), \quad \text{where $2 \leq k \leq n$}.\] For the case of constant Gauss curvature $(k=n)$, the problem was initiated by Labourie \cite{Labourie} in dimension $n=2$ and was later resolved by Rosenberg and Spruck \cite{Rosenberg-Spruck} in all higher dimensions $n \geq 3$. The scalar curvature case $(k=2)$ in dimension $n=3$ follows from a result of Lu \cite{Lu-PAMS} and was recently resolved by Wang \cite{Bin-Adv} in higher dimensions $n \geq 4$.

For the harmonic curvature $f=H_n/H_{n-1}$, the natural ellipticity cone is the positive cone $\Gamma_n=\Gamma^{+}$ and hence the existence result for all $\sigma \in (0,1)$ is covered by Guan-Spruck-Szapiel-Xiao's treatment \cite{JGA, SGAR, JDG} for locally strictly convex solutions. For the more general quotient $H_k/H_{k-1}$ in the $k$-th cone $\Gamma_k$, the existence result was obtained by Wang in \cite{Bin-MRL}. For the $k$-curvatures $H_{k}^{1/k}$ in $\Gamma_k$, where $3 \leq k \leq n-1$, Lu \cite{Lu-PAMS} settled the case $k=n-1, n \geq 3$, and Wang followed the same approach to resolve the case $k=n-2, n\geq 5$ in \cite{Bin-PAMS}. Thus, among the $k$-curvatures, the remaining cases are $3 \leq k \leq n-3$.

Our main result settles the problem for a large part of this range. In other words, we shall prove that any mean-convex $\Lambda$ in \eqref{asymptotic boundary} is the asymptotic boundary of a complete embedded hypersurface of constant $k$-curvature in $\bH^{n+1}$.
\begin{theorem} \label{the theorem}
Let $n \geq 3$ and $2 \leq k \leq n$ satisfy 
\[\text{either} \quad \frac{3-\sqrt{5}}{2}<\frac{2k}{n} \leq 1 \quad \text{or} \quad \frac{2k}{n}>1.\]
Suppose  
\[\Lambda \subset \partial_{\infty} \bH^{n+1}\] is a closed, embedded, smooth codimension one submanifold that can be viewed as the boundary $\partial \Omega$ of a smooth bounded domain $\Omega \subseteq \bR^n$ in the upper half-space model of $\bH^{n+1}$. Let $0<\sigma<1$ also be a given constant. If $\partial \Omega$ has non-negative mean curvature with respect to the inward unit normal, computed in the Euclidean metric, then there exists a smooth complete hypersurface $\Sigma$ in $\bH^{n+1}$ satisfying 
\[H_{k}^{1/k}(\kappa[\Sigma])=\sigma \quad \text{and} \quad \kappa[\Sigma] \in \Gamma_k \quad \text{on $\Sigma$} \]
with the asymptotic boundary
\[\partial_{\infty} \Sigma=\Lambda \quad \text{at infinity}.\] Moreover, $\Sigma$ is the graph of some $u \in C^{\infty}(\Omega) \cap C^1(\overline{\Omega})$ over $\Omega$.
\end{theorem}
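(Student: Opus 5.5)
We follow the Guan--Spruck framework for the asymptotic Plateau problem in the upper half-space model. Writing $\Sigma$ as a graph $x_{n+1}=u(x)$ over $\Omega$, the equation becomes a fully nonlinear elliptic Dirichlet problem that degenerates at $\partial\Omega$:
\[
f(\kappa[u]) = H_k^{1/k}(\kappa[u])=\sigma \ \text{in } \Omega, \qquad u=0 \ \text{on } \partial\Omega .
\]
We approximate it by the non-degenerate problems
\[
f(\kappa[u])=\sigma \ \text{in } \Omega_\varepsilon, \qquad u=\varepsilon \ \text{on } \partial\Omega_\varepsilon ,
\]
where $\Omega_\varepsilon$ is a suitable level set of the distance function that converges to $\Omega$. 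We then aim for estimates that are uniform in $\varepsilon$ and pass to the limit.

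\textbf{Step 1: $C^0$ and $C^1$ estimates.} These use only the mean convexity of $\partial\Omega$ and are essentially the standard ones from \cite{JEMS}.
\begin{itemize}[label={}, leftmargin=0pt]
\item Equidistant spheres give sub- and supersolution barriers, so that $\sqrt{1-\sigma^2}\,d/\ldots\lesssim u$. Comparing with the $H_1$ problem (via the Maclaurin inequality $H_k^{1/k}\le H_1$) gives the upper barrier $u \le C\,d$.
\item The Nelli--Spruck type boundary gradient estimate gives $\nu^{n+1}\to\sigma$ at $\partial\Omega$.
\item A global gradient bound $\sqrt{1+|Du|^2}\le C$ follows from the maximum principle applied to $\nu^{n+1}$, or to $u\nu^{n+1}$.
\end{itemize}
The boundary second derivative estimates on $\partial\Omega_\varepsilon$ also follow \cite{JEMS}: pure tangential derivatives come from the boundary data, mixed derivatives from a barrier argument with the linearized operator, and double normal derivatives from the Trudinger-type argument. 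All of these work in $\Gamma_k$ for general $f$, so the theorem's restriction on $k/n$ plays no role here.

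\textbf{Step 2: global curvature estimate (the main obstacle).} This is where Guan--Spruck and Xiao need $\sigma>\sigma_0$. They apply the maximum principle to
\[
\log\kappa_{\max} - \log(\nu^{n+1}-a),
\]
and the bad third-order term $-F^{ii}\kappa_{11i}^2$ has to be absorbed by a concavity inequality. We plan to split the analysis at an interior maximum point into two cases.
\begin{itemize}[label={}, leftmargin=0pt]
\item \emph{Case (a): $\kappa_n \ge -\delta\kappa_1$ for a small $\delta>0$.} Here the configuration is semi-convex. The refined concavity inequality
\[
-\sum_{p\ne q}\frac{\sigma_k^{pp,qq}h_{pp1}h_{qq1}}{\sigma_k} + \frac{(\sigma_k)_1^2}{\sigma_k^2} \ \ge\ \ldots \ \ge\ (1+c)\frac{h_{111}^2}{\kappa_1^2}\,\frac{\sigma_k^{11}\kappa_1}{\sigma_k}
\]
is available in the spirit of Guan--Ren--Wang and Lu. It controls the third-order terms, and the standard computation closes with a bound $\kappa_1\le C$ that is independent of $\sigma$.
\item \emph{Case (b): $\kappa_n<-\delta\kappa_1$.} A large negative curvature forces $F^{nn}\kappa_n^2\gtrsim \kappa_1^2\sum_i F^{ii}$. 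This gives a good positive term coming from $\nu^{n+1}$, namely the term $\sum_i F^{ii}\kappa_i^2$ in the equation for $\nu^{n+1}$, which dominates everything else and yields the bound directly.
\end{itemize}
The role of the hypothesis
\[
\frac{3-\sqrt5}{2}<\frac{2k}{n}
\]
is to guarantee that case (b) is genuinely quantitative. Using Newton--Maclaurin inequalities in $\Gamma_k$, we will show that when $\kappa_n<-\delta\kappa_1$ with $\delta$ fixed, the ratio $\sigma_k^{nn}/\sum_i\sigma_k^{ii}$ is bounded below. We will also show that the threshold $\delta$ needed in case (a) is compatible with this bound. Solving the resulting quadratic constraint in $t=2k/n$, namely $t^2-3t+1<0$, yields exactly the stated range. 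Checking this compatibility is the hardest and most delicate part of the proof.

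\textbf{Step 3: conclusion.} Once we have a uniform estimate of the form
\[
\kappa_{\max}\le C(1+\sup_{\partial}\kappa_{\max}),
\]
Evans--Krylov theory and Schauder estimates give local $C^{2,\alpha}$ and higher-order bounds. The continuity method, or a degree argument in the spirit of \cite{JEMS}, then solves the approximating problems. A diagonal subsequence converges on compact subsets as $\varepsilon\to0$ to $u\in C^\infty(\Omega)\cap C^1(\overline\Omega)$. The barriers from Step 1 give $u=0$ on $\partial\Omega$, so $\partial_\infty\Sigma=\Lambda$, and the graph $\Sigma$ is complete.
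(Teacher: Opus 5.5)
Your Steps 1 and 3 agree with the paper, which takes the $C^0$, $C^1$ (in particular $\nu^{n+1}\ge\sigma$), boundary $C^2$, continuity-method and limiting arguments directly from Guan--Spruck. The gap is Step~2. The entire content of the theorem is the $\varepsilon$-independent estimate $\max_\Omega\kappa_{\max}\le C(1+\max_{\partial\Omega}\kappa_{\max})$ for all $\sigma\in(0,1)$, and Step~2 does not prove it.

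You keep Guan--Spruck's auxiliary function $\log\kappa_{\max}-\log(\nu^{n+1}-a)$, which is the one that produced the restriction $\sigma>\sigma_0$. You then transplant the Euclidean case split $\kappa_n\gtrless-\delta\kappa_1$ without addressing what is specific to $\bH^{n+1}$.

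\emph{Case (b).} At the maximum point the critical equation makes $h_{11i}/\kappa_1$ proportional to $\nabla_i\nu^{n+1}=\frac{u_i}{u}(\nu^{n+1}-\kappa_i)$, with a factor that can be of order $\sigma^{-1}$. So $-F^{11}h_{111}^2/\kappa_1^2$ can be of size $\sigma^{-2}F^{11}\kappa_1^2$. This is the same order in $\kappa_1$ as your good term $\sum_iF^{ii}\kappa_i^2$. Your claim that $F^{nn}\kappa_n^2\gtrsim\delta^2\kappa_1^2\sum_iF^{ii}$ ``dominates everything else'' is therefore unjustified: the comparison of coefficients involves $\sigma^{-2}$, which is exactly the dependence the theorem must avoid.

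\emph{Case (a).} Even if the third-order terms were disposed of, the computation does not close. The term $-2N\sum_iF^{ii}\frac{u_i}{u}\frac{\nabla_i\nu^{n+1}}{\nu^{n+1}}$ has no Euclidean counterpart. It can only be absorbed by $(N-1)\sum_iF^{ii}$ together with $\sum_{i>m}\frac{\kappa_1+\kappa_i}{\kappa_1-\kappa_i}\frac{F^{ii}h_{11i}^2}{\kappa_1^2}$, and this forces a lower bound on $N$. In addition, the Ren--Wang type inequality the paper relies on for $2k\le n$ (Lemma~\ref{the concavity inequality 2}) needs an absolute bound $\kappa_n\ge-A$, not a relative one. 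You do not say which inequality you would use instead.

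The missing idea is how the paper gets that absolute lower bound. It uses $Q=\kappa_{\max}/(\nu^{n+1})^N$ with a moderate $N$ and keeps the term $N\sum_iF^{ii}(\nabla_i\nu^{n+1})^2/(\nu^{n+1})^2$. At the critical point this term equals $\frac1N\sum_iF^{ii}h_{11i}^2/\kappa_1^2$, so it scales exactly like the bad terms, with no $\sigma$-dependence.

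\begin{itemize}
\item Yan's inequality (Lemma~\ref{the concavity inequality 1}, with $\gamma_0<2k/n$) leaves a deficit $(\gamma_0-1)F^{11}h_{111}^2/\kappa_1^2$. The coefficient $\frac{\kappa_1+\kappa_i}{\kappa_1-\kappa_i}$ is only bounded below by $\frac{2k-n}{n}$. Both are compensated once $1<N\le\frac{1}{1-\gamma_0}$.
\item What remains gives $\frac{N-1}{2}F^{nn}\kappa_n^2\le C\sum_iF^{ii}$, hence $\kappa_n\ge-C$ (Lemma~\ref{the claim}).
\item Only then does Lemma~\ref{the concavity inequality 2} apply, and $\frac{\kappa_1+\kappa_i}{\kappa_1-\kappa_i}\ge1-\delta$. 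Completing the square in the cross term requires $N-1>\bigl((1-\delta)+\frac1N\bigr)^{-1}$, essentially $N>\frac{1+\sqrt5}{2}$.
\item The window $\frac{1+\sqrt5}{2}<N\le\frac{1}{1-\gamma_0}$ is nonempty if and only if $\gamma_0^2-3\gamma_0+1<0$. Since $\gamma_0<2k/n$, this becomes the hypothesis on $2k/n$.
\item For $2k>n$ the paper instead uses Yan's inequality to make the first line nonnegative, takes $N$ large, and uses the positive coefficient $\frac{2k-n}{n}$ (Lu's argument).
\end{itemize}

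Your account of the threshold is therefore not correct. The bound $\sigma_k^{nn}\ge c(n,k)\sum_i\sigma_k^{ii}$ for $\kappa_n\le0$ holds unconditionally in $\Gamma_k$ (Lemma~\ref{sigma_k properties 2}), so it cannot be the source of the restriction. Your quadratic $t^2-3t+1<0$ is read off from the statement rather than derived; in the paper the quadratic is in $\gamma_0$, not in $2k/n$.
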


\begin{remark}
Since the open cases for the $k$-curvatures are $3 \leq k \leq n-3$, we would like the result to hold for, e.g.,
\[\frac{2k}{n} \geq \frac{6}{n}.\] In case $n \geq 16$ is large, if one could improve the lower bound
\[\text{from} \quad 0.382 \approx \frac{3-\sqrt{5}}{2} \quad \text{to} \quad \frac{6}{n},\] then the problem would be fully settled for all the $k$-curvatures. The only ingredient of our proof that does not hold for the gap
\begin{equation}
\frac{6}{n} \leq \frac{2k}{n} \leq \frac{3-\sqrt{5}}{2}\label{the gap}
\end{equation} is Lemma \ref{the claim 2}; otherwise our approach is completely general.
\end{remark}

\begin{remark}
It might be possible to improve the threshold
\[\frac{3-\sqrt{5}}{2}\] by refining our computations. We may perhaps return to this subject at a later time.
\end{remark}

For the proof, we follow Guan and Spruck \cite{JEMS} to work in the half-space model of $\bH^{n+1}$ and seek the desired hypersurface $\Sigma$ as a vertical graph of some positive smooth function $u$ over a bounded domain $\Omega \subseteq \bR^n$ with mean-convex boundary. The conditions \eqref{req1}-\eqref{req2} then reduce to the following Dirichlet problem for a fully nonlinear elliptic equation,
\begin{equation} \label{the Dirichlet problem}
\begin{alignedat}{2}
G(D^2u,Du,u)&=\sigma, &\quad u>0, \quad & \text{in $\Omega$},\\
u&=0 &\quad & \text{on $\partial \Omega$}.\\
\end{alignedat}
\end{equation} 
Since the equation degenerates at the boundary where $u=0$, we first solve, for small $\varepsilon>0$, the approximate problem
\begin{equation} \label{the approximate problem}
\begin{alignedat}{2}
G(D^2u,Du,u)&=\sigma, &\quad u>0, \quad & \text{in $\Omega$},\\
u&=\varepsilon &\quad & \text{on $\partial \Omega$},\\
\end{alignedat}
\end{equation} and then take the limit $\varepsilon \to 0$ to obtain a solution $u \in C^{\infty}(\Omega) \cap C^1(\overline{\Omega})$ of \eqref{the Dirichlet problem}, whose graph is complete and has the asymptotic boundary $\Lambda=\partial \Omega$.

We shall apply the standard continuity method to solve the approximate problem \eqref{the approximate problem}, for which we need to establish a priori estimates for admissible solutions up to the second order. In \cite{JEMS}, Guan and Spruck have perfectly established all the required estimates, especially the very delicate boundary $C^2$ estimate,
\begin{equation}
\max_{\partial \Omega} u|D^2u| \leq C,\label{boundary C2}
\end{equation} where $C>0$ is independent of $\varepsilon$. Combining this with the global-to-boundary $C^2$ estimate,
\[u|D^2u| \leq C \left(1+\max_{\partial \Omega} u|D^2u|\right)\frac{u^2}{\varepsilon^2} \quad \text{in $\Omega$},\]
that they obtained in the joint work \cite[Theorem 5.2]{JGA} with Szapiel, they concluded that
\begin{equation}
\max_{\overline{\Omega}} u|D^2u| \leq \frac{C}{\varepsilon^2}, \label{global C2}
\end{equation} where $C>0$ is independent of $\varepsilon$. This global estimate was sufficient for them to solve the approximate problem \eqref{the approximate problem}, but it forbids the passage to the limit as $\varepsilon \to 0$.

The additional ingredient needed for the limiting process is a new control on the hyperbolic principal curvatures
\begin{equation}
\max_{\Omega} \kappa_{\max} \leq C \left(1+ \max_{\partial \Omega} \kappa_{\max}\right), \label{global C2 curvature}
\end{equation} where $C>0$ is independent of $\varepsilon$. This is the one and only place where Guan and Spruck \cite[Theorem 6.1]{JEMS} had to restrict the range of $\sigma$ to $(\sigma_0,1)$. Hence, in order to prove Theorem \ref{the theorem}, the task is reduced to proving the curvature estimate \eqref{global C2 curvature} for all $\sigma \in (0,1)$.

When deriving \eqref{global C2 curvature} by the standard maximum-principle argument, there will be some additional negative terms that have no counterparts in the Euclidean space. This prevents us from a direct application of the novel techniques due to Guan-Li-Li \cite{Guan-Li-Li}, Guan-Ren-Wang \cite{Guan-Ren-Wang}, and Sheng-Urbas-Wang \cite{Sheng-Urbas-Wang} for deriving curvature estimates for the $k$-curvature equations in $\bR^{n+1}$. The recent techniques developed by Lu \cite{Lu-CVPDE} and Yang \cite{Yang} for deriving curvature estimates in $\bH^{n+1}$ cannot be adapted to our setting as well. We discuss the precise obstruction in Section \ref{the difficulties} below.

New elements were introduced by Lu in a separate work \cite{Lu-PAMS}. Firstly, Lu applied the maximum-principle argument to an auxiliary function that is different from the one used by Guan and Spruck \cite{SGAR}. The resulting positive terms can now absorb the most troublesome negative term. However, there arises in correspondence another potentially large negative term. To resolve this issue, Lu used a powerful concavity inequality due to Ren and Wang \cite{Ren-Wang-1} which was derived for the $H_{n-1}^{1/(n-1)}$ operator. The inequality enabled Lu to eliminate the newly arisen negative term, and hence proved the existence result for $k=n-1, n \geq 3$. Subsequently, Wang \cite{Bin-PAMS} followed the same argument to get the existence for $k=n-2, n \geq 5$, by making use of the analogous inequality of Ren-Wang \cite{Ren-Wang-2} that was derived for the $H_{n-2}^{1/(n-2)}$ operator.

For the intermediate cases $3 \leq k \leq n-3$, Wang \cite{Bin-MRL} was able to prove the curvature estimate \eqref{global C2 curvature} in the smaller cone $\Gamma_{k+1} \subseteq \Gamma_{k}$. Hong and Zhang \cite{Hong-Zhang} were able to derive \eqref{global C2 curvature} for $\Gamma_k$-admissible solutions that additionally satisfy a uniform curvature lower bound,
\begin{equation}
\exists\ A>0: \quad \kappa_i \geq -A \quad \text{for all} \quad 1 \leq i \leq n \quad \text{on $\Sigma$}. \label{lower bound assumption}
\end{equation}
This lower bound assumption was used by them to derive a concavity inequality for the operator $H_{k}^{1/k}$ that improves an earlier inequality of Lu \cite{Lu-CVPDE}. Their estimate was an important step forward, but it does not by itself yield the existence theorem, because the bound \eqref{lower bound assumption} is not available a priori. 

Recently, a new concavity inequality for the $H_{k}^{1/k}$ operator has been verified by Yan \cite{Yan}, which holds without the lower bound assumption \eqref{lower bound assumption}; the inequality was conjectured to be true by Ren and Wang in \cite{Ren-Wang-2}. It seems now that we should be able to get Theorem \ref{the theorem} for all $k$ by combing Lu's argument \cite{Lu-PAMS} with Yan's inequality \cite{Yan}. However, we clarify that this is not yet the case. When $2k>n$, Yan's inequality is as powerful as the ones due to Ren-Wang \cite{Ren-Wang-1, Ren-Wang-2}, so in this case, we would be able to obtain the estimate \eqref{global C2 curvature} for all $\sigma \in (0,1)$, by following Lu's derivation in \cite{Lu-PAMS}. In contrast, when $2k = n$, even though Yan's inequality is still valid, the lower bound of a particularly important term whose coefficient depends on $2k-n$, degenerates to $0$ and Lu's argument cannot be fully invoked. The situation is apparently even worse when $2k<n$ in which case the concavity inequality of Yan also starts to lose its effectiveness. 

The new contribution of Theorem \ref{the theorem} is the treatment for a large part of the range $2k \leq n$. We adapt a reduction technique from Wang's work \cite{Bin-Adv} on the case $k=2, n \geq 4$ to make the lower bound condition \eqref{lower bound assumption} available. This provides a positive lower bound for the particular term that depends on the value of $2k-n$ and permits the use of Hong-Zhang's concavity inequality \cite{Hong-Zhang, Zhang}. Combining these ingredients with Lu's argument in \cite{Lu-PAMS}, we are able to obtain the curvature estimate \eqref{global C2 curvature} when
\[\frac{3-\sqrt{5}}{2}<\frac{2k}{n} \leq 1.\]

\begin{remark}
In \cite{Yan}, Yan applied the concavity inequality to obtain $C^2$ estimates for the $k$-curvature equations in $\bR^{n+1}$ when $2k \geq n$. In that space, the work of Guan-Ren-Wang \cite{Guan-Ren-Wang} and subsequent developments \cite{Lu-CVPDE, Yang, Ren-Wang-1, Ren-Wang-2, Li-Ren-Wang} would make the proof of $C^2$ estimates essentially standard once an appropriate concavity inequality is available; the borderline case $2k=n$ causes no additional difficulty.
However, in our setting, by contrast, the negative sectional curvature of $\bH^{n+1}$ produces several more negative terms, and the equation also degenerates at the asymptotic boundary. These features are responsible for the substantial difficulties when $2k \leq n$; see Remark \ref{the degeneracy} below.
\end{remark}

The rest of this note is organized as follows. In Section \ref{Preliminaries}, we fix the notation, review the upper half-space model of $\bH^{n+1}$, and recall some properties of the elementary symmetric polynomials
\[\sigma_k(\kappa)=\sum_{1 \leq i_1<\cdots<i_k \leq n} \kappa_{i_1}\cdots \kappa_{i_k}.\] In Section 
\ref{2k<=n proof}, we derive the required curvature estimate \eqref{global C2 curvature} when
\[\frac{3-\sqrt{5}}{2}<\frac{2k}{n} \leq 1.\] In Section \ref{2k>n proof}, we derive the curvature estimate when
\[\frac{2k}{n}>1.\] Once we have the curvature estimates, Theorem \ref{the theorem} would follow as a consequence of the general framework of Guan and Spruck in \cite{JEMS}.

\begin{remark}
After this work was posted on arXiv, we found a preprint \cite{Mei-Yan} of Mei and Yan that was posted on the same day. In that preprint, they were able to get our main result for all $2k \leq n$, while we need to impose 
\begin{equation}
\frac{2k}{n}>\frac{3-\sqrt{5}}{2} \approx 0.382 \label{the restriction}
\end{equation} in Theorem \ref{the theorem}.

Being inspired by a keen observation (see \eqref{the observation} below) in their argument, we found that one key step of our proof can be slightly revised so that Lemma \ref{the claim 2} can hold for the gap range \eqref{the gap} as well. In other words, we can remove the restriction \eqref{the restriction} from Theorem \ref{the theorem} and the problem \eqref{req1}-\eqref{req2} has now been fully settled for all the $k$-curvatures.

We shall keep Theorem \ref{the theorem} in the original form and also not alter the proof in Section \ref{2k<=n proof}. However, we shall include an appendix to show the extension of Lemma \ref{the claim 2} to all $2k \leq n$, as our proof is different from Mei-Yan in character and may stay helpful to some readers.

Mei and Yan were the first to obtain Theorem \ref{the theorem} for all the intermediate cases $3 \leq k \leq n-3$.
\end{remark}

\section{Preliminaries} \label{Preliminaries}

\subsection{The upper half-space model for $\bH^{n+1}$}
\indent

Following Guan-Spruck \cite{JEMS}, we use the upper half-space model for the hyperbolic space, 
\[\bH^{n+1}=\{(x,x_{n+1}) \in \bR^{n+1}: x_{n+1}>0\},\] that is equipped with the metric
\[ds^2=\frac{1}{x_{n+1}^2}\sum_{i=1}^{n+1} dx_{i}^2.\] Thus, the ideal boundary $\partial_{\infty} \bH^{n+1}$ is naturally identified with $\bR^n=\bR^n \times \{0\} \subseteq \bR^{n+1}$ and \eqref{req2} may be understood in the Euclidean sense. In this model, we view $\Lambda=\partial \Omega$ as the boundary of a smooth bounded domain $\Omega$ in $\bR^n$.

\begin{definition}
We say the smooth bounded domain $\Omega$ is mean-convex, if the mean curvature of the boundary $\partial \Omega$ with respect to the inward unit normal, computed in the Euclidean metric, is non-negative.
\end{definition}

Suppose $\Sigma=\{(x,u(x)): x \in \Omega\}$ is the vertical graph of a function $u \in C^2(\Omega)$ over $\Omega$, which is oriented by the upward Euclidean unit normal vector field
\[\nu=\left(-\frac{Du}{w}, \frac{1}{w}\right), \quad \text{where $w=\sqrt{1+|Du|^2}$}.\] The component
\[\nu^{n+1}=\frac{1}{\sqrt{1+|Du|^2}}\] will prove to be useful in subsequent sections. The symbols $g$ and $\nabla$ will be used to denote the induced hyperbolic metric and the corresponding Levi-Civita connection on $\Sigma$. Since $\Sigma$ is also a submanifold of $\bR^{n+1}$, we will add a ``tilde'' over geometric quantities that are with respect to the Euclidean metric, $\tilde{g}$ and $\tilde{\nabla}$. In particular, some formulas \cite[Lemma 4.1]{Sui-CVPDE} that we will use are the following,
\begin{gather}\label{geometric formula}
\begin{split}
\tilde{g}^{jk}&=\frac{\delta_{jk}}{u^2}, \quad h_{ij}=\frac{1}{u}\tilde{h}_{ij}+\frac{\nu^{n+1}}{u^2}\tilde{g}_{ij},\\
\nabla_i \nu^{n+1}&=-\tilde{h}_{ij}\tilde{g}^{jk}\nabla_k u, \quad \sum_{i=1}^{n} \frac{(\nabla_iu)^2}{u^2}=|\tilde{\nabla}u|^2=1-(\nu^{n+1})^2 \leq 1,
\end{split}
\end{gather} in a local orthonormal frame on $\Sigma$.

\subsection{Properties of the $\sigma_k$ operator}
\indent

Recall the definition of the $k$-th elementary symmetric polynomial,
\[\sigma_k(\kappa_1,\ldots,\kappa_n):=\sum_{1 \leq i_1<\cdots<i_k \leq n} \kappa_{i_1}\cdots \kappa_{i_k},\] and we adopt the convention that $\sigma_0:=1$ and $\sigma_k:=0$ for $k>n$. The natural ellipticity cone is $k$-th \Garding\ cone, which is an open symmetric convex cone defined by 
\[\Gamma_k:=\{\kappa \in \bR^n: \sigma_j(\kappa)>0 \quad \forall\ 1 \leq j \leq k\}.\]

\begin{notation}
Observe that
\[\frac{\partial}{\partial \kappa_i}\sigma_k(\kappa)=\sigma_{k-1}(\kappa)\bigg|_{\kappa_i=0}=\sigma_{k-1}(\kappa_1,\ldots,\kappa_{i-1},0,\kappa_{i+1},\ldots,\kappa_n).\] Therefore, we may use $\sigma_{k-1}(\kappa|i)$ to denote the first order derivatives of $\sigma_k(\kappa)$. The notation $\sigma_{k-2}(\kappa|ij)$ is defined in the same fashion for the second order derivatives. 
\end{notation}

The following are some commonly used properties of the $\sigma_k$ operator and we may state them without proofs.
\begin{lemma} \label{sigma_k properties 1}
For all $1 \leq k \leq n$ and $\kappa \in \bR^n$, we have
\begin{align*}
&\ \sigma_k(\kappa)=\kappa_i\sigma_{k-1}(\kappa|i)+\sigma_{k}(\kappa|i) \quad \text{for all $1 \leq i \leq n$},\\
&\ \sum_{i=1}^{n}\kappa_i\sigma_{k-1}(\kappa|i)=k\sigma_k(\kappa), \\
&\ \sum_{i=1}^{n} \sigma_{k-1}(\kappa|i)=(n-k+1)\sigma_{k-1}(\kappa).
\end{align*}
\end{lemma}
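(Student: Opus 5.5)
The three identities all come from splitting $\sigma_k$ according to whether a given index appears in a monomial, followed by a count of how often each monomial is produced. I will use only the definition of $\sigma_k$ and the convention that $\sigma_{k-1}(\kappa|i)$ is $\sigma_{k-1}$ evaluated at $\kappa$ with $\kappa_i$ replaced by $0$.

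\emph{First identity.} Fix $i$. Split the monomials $\kappa_{i_1}\cdots\kappa_{i_k}$ in $\sigma_k(\kappa)$ into those containing the index $i$ and those not containing it. The monomials without $i$ sum to $\sigma_k(\kappa_1,\dots,0,\dots,\kappa_n)=\sigma_k(\kappa|i)$. The monomials with $i$ are exactly $\kappa_i$ times a product of $k-1$ distinct indices from $\{1,\dots,n\}\setminus\{i\}$. Their sum is therefore $\kappa_i\sigma_{k-1}(\kappa|i)$, which gives
\[\sigma_k(\kappa)=\kappa_i\sigma_{k-1}(\kappa|i)+\sigma_k(\kappa|i).\]
For $k=1$ this uses $\sigma_0=1$, and for $k=n$ it uses $\sigma_n(\kappa|i)=0$; both hold by the stated conventions.

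\emph{Second identity.} Sum the terms $\kappa_i\sigma_{k-1}(\kappa|i)$ over $i$. Each such term is the sum of the monomials of $\sigma_k$ that contain $i$. A fixed monomial $\kappa_{i_1}\cdots\kappa_{i_k}$ therefore appears once for each of its $k$ indices, so the total is $k\sigma_k(\kappa)$. Equivalently, this is Euler's identity for the degree-$k$ homogeneous polynomial $\sigma_k$, since $\partial_{\kappa_i}\sigma_k=\sigma_{k-1}(\kappa|i)$.

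\emph{Third identity.} The sum $\sum_i\sigma_{k-1}(\kappa|i)$ collects the $(k-1)$-fold monomials $\kappa_{j_1}\cdots\kappa_{j_{k-1}}$ with $i\notin\{j_1,\dots,j_{k-1}\}$. A fixed monomial of $\sigma_{k-1}(\kappa)$ is counted once for each index $i$ outside its index set, and there are $n-(k-1)$ such indices. The sum is therefore $(n-k+1)\sigma_{k-1}(\kappa)$.

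None of these steps presents a real obstacle. The only points needing care are the edge cases $k=1$ and $k=n$, and the conventions $\sigma_0=1$ and $\sigma_k=0$ for $k>n$ make all three formulas valid there.
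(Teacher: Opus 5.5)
Your proof is correct. The paper states this lemma without proof as a standard fact, and your monomial-splitting and double-counting argument is the standard verification it implicitly relies on. One small imprecision: $\sigma_n(\kappa|i)=0$ follows directly from the definition, because the single monomial of $\sigma_n$ contains $\kappa_i$, which is set to zero; it does not come from the convention $\sigma_k=0$ for $k>n$.
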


\begin{lemma} \label{sigma_k properties 2}
Let $\kappa \in \Gamma_k$ be ordered as 
\[\kappa_1 \geq \cdots \geq \kappa_n.\] Then $\kappa_k>0$ and 
\[0<\frac{\partial\sigma_k}{\partial \kappa_1} (\kappa) \leq \cdots \leq \frac{\partial \sigma_k}{\partial \kappa_n}(\kappa).\] If $\kappa_i \leq 0$, then
\[\kappa_i>-\frac{n-k}{k} \kappa_1\] and 
\[\sigma_{k-1}(\kappa|i) \geq C(n,k) \sigma_{k-1}(\kappa).\]
\end{lemma}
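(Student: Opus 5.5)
We prove the four assertions in turn, using only the identities of Lemma \ref{sigma_k properties 1}. Throughout, $\kappa\in\Gamma_k$ is ordered $\kappa_1\ge\cdots\ge\kappa_n$.

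\emph{Positivity and monotonicity of the derivatives.} We use the standard fact that if $\kappa\in\Gamma_k$ then $(\kappa|i)\in\Gamma_{k-1}$ for every $i$. In particular $\sigma_{k-1}(\kappa|i)>0$. Monotonicity follows by separating the two variables involved. For $i<j$,
\[
\sigma_{k-1}(\kappa|i)-\sigma_{k-1}(\kappa|j)=(\kappa_j-\kappa_i)\,\sigma_{k-2}(\kappa|ij).
\]
Here $\kappa_j-\kappa_i\le 0$. Applying the same fact twice gives $(\kappa|ij)\in\Gamma_{k-2}$, so $\sigma_{k-2}(\kappa|ij)>0$. Hence $\sigma_{k-1}(\kappa|i)\le\sigma_{k-1}(\kappa|j)$.

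\emph{$\kappa_k>0$.} Suppose instead $\kappa_k\le 0$. Then at most $k-1$ entries are positive. Removing the $k-1$ largest entries leaves only nonpositive entries. On the other hand, iterating the fact above, that vector lies in $\Gamma_1$ and so has positive sum. This is a contradiction.

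\emph{Lower bound for a nonpositive entry.} Let $\kappa_i\le 0$, and put $\lambda=(\kappa|i)\in\Gamma_{k-1}$. By the first identity of Lemma \ref{sigma_k properties 1},
\[
0<\sigma_k(\kappa)=\kappa_i\sigma_{k-1}(\lambda)+\sigma_k(\lambda),
\qquad\text{so}\qquad
-\kappa_i<\frac{\sigma_k(\lambda)}{\sigma_{k-1}(\lambda)}.
\]
If $\sigma_k(\lambda)\le 0$ this contradicts $\kappa_i\le 0$, so we may assume $\sigma_k(\lambda)>0$. Newton--Maclaurin on the $n-1$ variables of $\lambda$ gives
\[
\frac{\sigma_k(\lambda)}{\sigma_{k-1}(\lambda)}\le\frac{n-k}{k}\cdot\frac{\sigma_1(\lambda)}{n-1}.
\]
Since $\sigma_1(\lambda)\le(n-1)\kappa_1$, we get $-\kappa_i<\frac{n-k}{k}\kappa_1$, which is the claim.

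\emph{Comparability $\sigma_{k-1}(\kappa|i)\ge C\sigma_{k-1}(\kappa)$.} Again by Lemma \ref{sigma_k properties 1}, applied at level $k-1$,
\[
\sigma_{k-1}(\kappa)=\kappa_i\sigma_{k-2}(\kappa|i)+\sigma_{k-1}(\kappa|i).
\]
When $\kappa_i\le 0$ the first term is at most $0$, because $\sigma_{k-2}(\kappa|i)>0$. Hence $\sigma_{k-1}(\kappa|i)\ge\sigma_{k-1}(\kappa)$, and in fact $C=1$ works.

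\emph{Main obstacle.} The only nonelementary step is the fact that $\kappa\in\Gamma_k$ implies $(\kappa|i)\in\Gamma_{k-1}$. I would prove it by induction on $k$, expanding $\sigma_j(\kappa)$ as a polynomial in $\kappa_i$. Alternatively, Gårding's theory of hyperbolic polynomials gives it directly: $\Gamma_k$ is the Gårding cone of $\sigma_k$ with respect to $(1,\dots,1)$, and $\partial\sigma_k/\partial\kappa_i$ is hyperbolic with cone containing $\Gamma_k$. Everything else is the algebra above.
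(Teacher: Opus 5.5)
The paper does not prove this lemma. It lists it among the ``commonly used properties'' of $\sigma_k$ that are stated without proof, so your argument supplies something the source omits, and it is correct. Everything reduces, through the splitting identity $\sigma_j(\kappa)=\kappa_i\sigma_{j-1}(\kappa|i)+\sigma_j(\kappa|i)$, to two inputs: the inheritance property $(\kappa|i)\in\Gamma_{k-1}$, and Newton--Maclaurin in $n-1$ variables.

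Two features of your route are worth noting:
\begin{itemize}
\item Your constant $\frac{n-k}{k}$ is sharp (take $\kappa_j=\kappa_1$ for all $j\neq i$). It is exactly what produces the coefficient $\frac{2k-n}{n}$ in the paper's lower bound for $\frac{\kappa_1+\kappa_i}{\kappa_1-\kappa_i}$.
\item Your comparability step gives $C=1$. Hence the bound $F^{nn}\geq C(n,k)\sum_i F^{ii}$ used in Lemma~\ref{the claim} holds with $C(n,k)=\frac{1}{n-k+1}$. Read with $\kappa_i\geq 0$, the same identity gives the reverse inequality $\sigma_{k-1}(\kappa|i)\leq\sigma_{k-1}(\kappa)$, which the appendix relies on.
\end{itemize}

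The one step you leave as a sketch needs one more ingredient than you indicate. Expanding in $\kappa_i$ alone does not close the induction for $(\kappa|i)\in\Gamma_{k-1}$; Newton's inequality is also required. Set $a=\kappa_i$ and $s_j=\sigma_j(\kappa|i)$, with $s_0=1$. The inductive hypothesis, applied to $\kappa\in\Gamma_{k-1}$, gives $s_1,\ldots,s_{k-2}>0$. Suppose $s_{k-1}\leq 0$. Then $\sigma_{k-1}(\kappa)>0$ forces $as_{k-2}>-s_{k-1}\geq 0$, and $\sigma_k(\kappa)>0$ forces $s_k>-as_{k-1}\geq 0$. Hence
\[
s_{k-2}s_k>(as_{k-2})(-s_{k-1})\geq s_{k-1}^2,
\]
which contradicts Newton's inequality in $n-1$ variables, $s_{k-2}s_k\leq\frac{(k-1)(n-k)}{k(n-k+1)}s_{k-1}^2$.

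For the \Garding\ alternative, note that $e_i\in\partial\Gamma_k$ when $k\geq 2$. \Garding's theorem concerns derivatives in directions inside the open cone, so it has to be applied to $e_i+\epsilon(1,\ldots,1)$ and then passed to the limit $\epsilon\to 0$.
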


\begin{lemma} \label{NM}
Let $n \geq k>l \geq 0$ and $n \geq r>s \geq 0$. Suppose $\kappa \in \Gamma_k$. If $k \geq r$ and $l \geq s$, then
\[\left[\frac{\binom{n}{k}^{-1}\sigma_k(\kappa)}{\binom{n}{l}^{-1}\sigma_l(\kappa)}\right]^{\frac{1}{k-l}} \leq \left[\frac{\binom{n}{r}^{-1} \sigma_r(\kappa)}{\binom{n}{s}^{-1}\sigma_s(\kappa)}\right]^{\frac{1}{r-s}}.\] 
\end{lemma}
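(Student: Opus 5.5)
This is the generalized Newton–Maclaurin inequality. The plan is to reduce it to the chain of elementary one-step Newton inequalities, so that the whole statement becomes a monotonicity property of a single sequence. Write $E_j=\binom{n}{j}^{-1}\sigma_j(\kappa)$ for $0\le j\le n$, so that $E_0=1$. Since $\kappa\in\Gamma_k$, we have $E_j>0$ for all $0\le j\le k$, and all quantities appearing in the statement are positive and well defined. The claim is that the averaged log-slope
\[
S(l,k):=\frac{\log E_k-\log E_l}{k-l}
\]
is nonincreasing in each argument over the range $0\le l<k\le k_0$, where $k_0$ is the fixed index with $\kappa\in\Gamma_{k_0}$. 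We then compare $(l,k)$ with $(s,r)$, using $s\le l$ and $r\le k$, with all indices at most $k_0=k$.

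The key input is Newton's inequality
\[
E_{j-1}E_{j+1}\le E_j^2 \qquad \text{for all } \kappa\in\mathbb{R}^n,\ 1\le j\le n-1,
\]
which holds for arbitrary real vectors. Writing $a_j=\log E_j$ for $0\le j\le k$, it gives $a_{j+1}-a_j\le a_j-a_{j-1}$ for $1\le j\le k-1$. In this range $E_{j+1}$ and $E_{j-1}$ are both positive, so taking logarithms is legitimate. Hence the increments $d_j=a_j-a_{j-1}$, $1\le j\le k$, form a nonincreasing sequence. The same concavity is implicit in the standard proof of the Maclaurin inequality.

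The next step is the elementary fact that for a concave sequence, the slope $S(l,k)$, being the average of $d_{l+1},\dots,d_k$, decreases when either endpoint increases. Increasing $k$ to $k+1$ appends $d_{k+1}$, which is at most every earlier increment and hence at most their average. Increasing $l$ removes $d_{l+1}$, which is at least the average of the remaining terms. Applying these moves successively gives
\[
S(l,k)\le S(s,k)\le S(s,r),
\]
where the second inequality moves the upper index from $r$ up to $k$ while keeping the lower index at $s$. When $r\le s$... this case cannot occur, since $r>s$ is assumed, so the intermediate pairs are valid. We also need $s<k$ for the middle term, which holds because $s\le l<k$. Exponentiating then yields the stated inequality.

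The main point requiring care is justifying Newton's inequality for non-positive entries. The standard proof works as follows. Consider the polynomial $\prod_i(t+\kappa_i)=\sum_j\binom{n}{j}E_jt^{n-j}$, which has only real roots. By Rolle's theorem, repeated differentiation and the reflection $t\mapsto1/t$ preserve real-rootedness. This reduces the inequality to a real-rooted quadratic $E_{j-1}t^2+2E_jt+E_{j+1}$, whose discriminant is nonnegative.

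The second point requiring care is positivity of the $E_j$ for $j\le k$, which is exactly the definition of $\Gamma_k$. The degenerate case $s=0$ with $E_0=1$ is included automatically.
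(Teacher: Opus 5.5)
The paper gives no proof of this lemma; it is listed among the standard properties of $\sigma_k$ that the author says ``we may state \ldots without proofs.'' Your argument is the standard proof of the generalized Newton--Maclaurin inequality, and it is correct as written. Newton's inequality $E_{j-1}E_{j+1}\le E_j^2$ holds for all real $\kappa$. Combined with the positivity of $E_0,\dots,E_k$ guaranteed by $\kappa\in\Gamma_k$, it makes $j\mapsto\log E_j$ concave on $0\le j\le k$. Monotonicity of averaged increments then gives $S(l,k)\le S(s,k)\le S(s,r)$, which is exactly the claim. The one degenerate case in your real-rootedness sketch is a vanishing leading coefficient $E_{j-1}$ of the final quadratic. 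It cannot arise in your application, since $j-1<k$ forces $E_{j-1}>0$.
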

\begin{notation}
The $\sigma_k$ operator can be regarded as a function on the space of $n \times n$ symmetric matrices, by writing $\sigma_k(A)$ to mean $\sigma_k(\lambda(A))$. In particular, $\sigma_k(A)$ is the sum of the $k \times k$ principal minors of $A$.
With this abuse of notation, we may also write $\sigma_{k}^{ij}$ and $\sigma_{k}^{ij,kl}$ to mean the first and the second order derivatives in the sense of \eqref{derivatives of F}. When the matrix is diagonal, the notations $\sigma_{k}^{ii}$ and $\sigma_{k}^{ii,jj}$ can be used to denote the derivatives in the sense of \eqref{derivatives of f} without causing confusion. 
\end{notation}

When $A=a_{ij}=\kappa_i\delta_{ij}$ is diagonalized with eigenvalues $\kappa_1 \geq \cdots \geq \kappa_n$, we have
\[
\sigma_k^{pq}(A)
=
\frac{\partial \sigma_k}{\partial \kappa_p}(\kappa)\delta_{pq}
=
\sigma_{k-1}(\kappa\mid p)\delta_{pq},
\]

\begin{equation}
\sigma_k^{pq,rs}(A)
=
\begin{cases}
\displaystyle
\sigma_{k-2}(\kappa\mid pr),
&
p=q,\ r=s,\ p\neq r,
\\[10pt]
\displaystyle
-\sigma_{k-2}(\kappa\mid pq),
&
p=s,\ q=r,\ p\neq q,
\\[10pt]
0,
&
\text{otherwise}.
\end{cases} \label{2nd derivatives for sigma k}
\end{equation} Moreover, by Lemma \ref{sigma_k properties 1} and Lemma \ref{sigma_k properties 2}, one can verify that
\begin{equation}
\sigma_{k}^{pp,qq}=\frac{\sigma_{k}^{pp}-\sigma_{k}^{qq}}{\kappa_q-\kappa_p} \geq 0 \quad \text{if $\kappa_p \neq \kappa_q$}.\label{2nd derivative for sigma k 2}
\end{equation}

\begin{lemma} \label{the concavity inequality 1}
Let $n \geq 3$ and $2 \leq k \leq n-1$. Given two positive constants $0 < \psi_0 \leq \psi_1 < \infty$, assume the set
\[\widetilde{\Gamma}_k:=\{\kappa \in \Gamma_k: \psi_0 \leq \sigma_k(\kappa) \leq \psi_1\}\] is non-empty. In addition, assume the vector $\kappa \in \widetilde{\Gamma}_k$ is ordered with multiplicity $m \geq 2$ in the sense that
\[\kappa_1 = \cdots = \kappa_m > \kappa_{m+1} \geq \cdots \geq \kappa_n.\] 
For 
\begin{equation}
0<\gamma_0<\min \left\{\frac{2k}{n}, 1+ \frac{2k-n}{2k^2+n}\right\},\label{gamma}
\end{equation} there exists some $\eta_{*}=\eta_{*}(n,k,\gamma_0, \psi_0,\psi_1)>0$ such that if 
\[\kappa_1 \geq \eta_{*}>0,\] then
\begin{gather} \label{concavity 1}
\begin{split}
& -\sum_{p \neq q} \frac{\sigma_{k}^{pp,qq} \xi_p\xi_q}{\kappa_1} - \gamma_0 \frac{\sigma_{k}^{11} \xi_{1}^2}{\kappa_{1}^2}+2\sum_{i>m} \frac{\sigma_{k}^{ii}\xi_{i}^2}{\kappa_1(\kappa_1-\kappa_i)}\\
\geq &\ - \frac{2}{\kappa_1 \sigma_k} \left(\sum_{i=1}^{n} \sigma_{k}^{ii}\xi_{i}\right)^2
\end{split}
\end{gather} for any $\xi \in \bR^n$ with
\[\xi_2 = \cdots = \xi_m = 0,\] where the $\sigma_k$ and its derivatives are evaluated at the given vector $\kappa \in \widetilde{\Gamma}_k$. If $m=1$, then the inequality \eqref{concavity 1} holds for all $\xi \in \bR^n$.
\end{lemma}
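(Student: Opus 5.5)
The statement is a concavity inequality of Ren--Wang type. The plan is to prove it by splitting the quadratic form into the $\xi_1$-part and the remaining variables, controlling the off-diagonal terms involving $\xi_1$ by Cauchy--Schwarz, and using the good term $2\sum_{i>m}\sigma_k^{ii}\xi_i^2/(\kappa_1(\kappa_1-\kappa_i))$ to absorb the cross terms. Throughout, multiply by $\kappa_1$ and set $\sigma_k=\psi\in[\psi_0,\psi_1]$.

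The starting point is the standard identity obtained from the concavity of $\sigma_k^{1/k}$ (equivalently, of $\log\sigma_k$ on $\Gamma_k$):
\[
-\sum_{p\neq q}\sigma_k^{pp,qq}\xi_p\xi_q \geq -\frac{k-1}{k}\frac{(\sum_i \sigma_k^{ii}\xi_i)^2}{\sigma_k}\cdot\Big(\text{up to the form }-\tfrac{(\sum\sigma_k^{ii}\xi_i)^2}{\sigma_k}+\sum_i\ldots\Big).
\]
More usefully, I would use the refined Guan--Ren--Wang decomposition. For fixed $\xi$, write
\[
-\sum_{p\ne q}\sigma_k^{pp,qq}\xi_p\xi_q=-\sum_{p\ne q}\sigma_{k-2}(\kappa|pq)\xi_p\xi_q,
\]
and isolate the terms with $p=1$ or $q=1$. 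For $p,q\ge2$, apply the concavity of $\sigma_{k-1}(\kappa|1)$, viewed as a $\sigma_{k-1}$ operator in $n-1$ variables, with $\kappa|1\in\Gamma_{k-1}$. This gives a lower bound of the form $-C(\sum_{i\ge2}\sigma_{k-1}(\kappa|1i)\xi_i)^2/\sigma_{k-1}(\kappa|1)$. The cross terms $-2\sum_{i\ge2}\sigma_{k-2}(\kappa|1i)\xi_1\xi_i$ are then combined with $(\sum\sigma_k^{ii}\xi_i)^2$ using $\sigma_k^{ii}=\kappa_1\sigma_{k-2}(\kappa|1i)+\sigma_{k-1}(\kappa|1i)$ for $i\geq 2$. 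Completing the square in $\xi_1$ reduces the inequality to a quadratic-form inequality in $(\xi_1,\xi_{m+1},\ldots,\xi_n)$; note that $\xi_2,\dots,\xi_m$ vanish, and those indices are precisely where $\kappa_1-\kappa_i=0$.

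Next I would use the identity \eqref{2nd derivative for sigma k 2}, $\sigma_k^{11,ii}=(\sigma_k^{ii}-\sigma_k^{11})/(\kappa_1-\kappa_i)$. This rewrites the cross terms so that the good term $2\sigma_k^{ii}\xi_i^2/(\kappa_1-\kappa_i)$ absorbs them, leaving a residual coefficient on $\sigma_k^{11}\xi_1^2/\kappa_1$. The restriction on $\gamma_0$ comes from requiring this residual to be at least $\gamma_0$. The bound $\gamma_0<2k/n$ should come from the asymptotics $\kappa_1\sigma_k^{11}\approx$ a multiple of $\sigma_k$ when $\kappa_1\to\infty$. Indeed, by Lemma \ref{sigma_k properties 1}, $\kappa_1\sigma_k^{11}=\sigma_k-\sigma_k(\kappa|1)$, and $\sigma_k(\kappa|1)$ is bounded below by a negative multiple controlled via Newton--Maclaurin (Lemma \ref{NM}) and Lemma \ref{sigma_k properties 2}. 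The second bound, $1+(2k-n)/(2k^2+n)$, should come from an optimization over the ratio $\kappa_n/\kappa_1\geq-(n-k)/k$.

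For large $\kappa_1\ge\eta_*$, I would use $\sigma_k\le\psi_1$ to show that all $\kappa_i$ with $i\geq k$ are $O(\kappa_1^{-1/(k-1)})$-small relative to $\kappa_1$ in the relevant sense, so that the error terms are $o(1)$ and can be absorbed. The case $m=1$ is the same argument without the vanishing constraint.

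The main obstacle is the $\xi_1$-coefficient analysis: proving that after completing squares the leftover satisfies $-\gamma_0\sigma_k^{11}\xi_1^2/\kappa_1+(\text{positive})\ge -2(\ldots)^2/\sigma_k$ exactly under \eqref{gamma}. This requires a sharp estimate of $\sum_i\sigma_k^{ii}$-type quantities when some $\kappa_i$ are negative. My first attempt would be to reduce to the extremal configuration $\kappa=(\kappa_1,\ldots,\kappa_1,\mu,\ldots,\mu)$ by a symmetrization and convexity argument, and then verify the resulting one-variable inequality.
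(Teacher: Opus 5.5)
There is a genuine gap. What you have is a programme rather than a proof, and its one concrete quantitative mechanism is false.

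\textbf{What the paper does.} The paper does not prove this lemma at all. It imports it from \cite{Yan} (Theorem 1.1 and Corollary 2.3 there), where it is the main result and confirms a conjecture of Ren and Wang \cite{Ren-Wang-2}. The step you defer is the main content of that work: after the cross terms $-2\sum_{i>m}\sigma_k^{11,ii}\xi_1\xi_i/\kappa_1$ are absorbed, the leftover coefficient of $\sigma_k^{11}\xi_1^2/\kappa_1^2$ must be at least $\gamma_0$ exactly under \eqref{gamma}. You only assert this (``should come from'').

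\textbf{The tools you name do not produce it.}
\begin{itemize}
\item Your first display is not a meaningful inequality. Concavity of $\sigma_k^{1/k}$ only gives $-\sum_{p\neq q}\sigma_k^{pp,qq}\xi_p\xi_q\geq-\frac{k-1}{k}\bigl(\sum_i\sigma_k^{ii}\xi_i\bigr)^2/\sigma_k$, which contains no positive $\xi_1^2$ term.
\item Absorbing the cross terms into $2\sum_{i>m}\sigma_k^{ii}\xi_i^2/(\kappa_1(\kappa_1-\kappa_i))$ by Cauchy--Schwarz and $\sigma_k^{11,ii}(\kappa_1-\kappa_i)=\sigma_k^{ii}-\sigma_k^{11}$ leaves a $\xi_1^2$-cost of order $\kappa_1^{-1}\sum_{i>m}\sigma_{k-2}(\kappa|1i)$. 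This is not controlled by, and can be far larger than, the budget $\gamma_0\sigma_k^{11}/\kappa_1^2$. For $k=2$ and $\kappa=(M,1/M,\ldots,1/M)$ the cost is $\sim 1/M$ against a budget $\sim 1/M^3$. The gain has to come from the coupling with $\bigl(\sum_i\sigma_k^{ii}\xi_i\bigr)^2$, and you never carry that out.
\item The reduction to two-valued vectors $(\kappa_1,\ldots,\kappa_1,\mu,\ldots,\mu)$ by ``symmetrization and convexity'' has no basis. The inequality is a quadratic form in $\xi$ whose coefficients depend non-convexly on $\kappa$, and nothing lets you replace $\kappa$ by such a vector.
\end{itemize}

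\textbf{The smallness device fails in $\Gamma_k$.} You plan to make all error terms $o(1)$ for $\kappa_1\geq\eta_*$, but $\sigma_k\leq\psi_1$ and $\kappa_1\to\infty$ do not make $\kappa_i$, $i\geq k$, small relative to $\kappa_1$. Take $n=3$, $k=2$ and fixed $c\in(0,1)$. The vector $\kappa=\bigl(M,\,cM,\,\tfrac{1-cM^2}{(1+c)M}\bigr)$ lies in $\Gamma_2$ with $\sigma_2(\kappa)=1$. Yet $\kappa_2=cM$ and $\kappa_3\approx-\frac{c}{1+c}M$.

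The same example gives $\kappa_1\sigma_2^{11}=\frac{c^2M^2+1}{1+c}$. So $\kappa_1\sigma_k^{11}$ is not comparable to $\sigma_k$, and your proposed explanation of the threshold $2k/n$ also breaks down.

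Without a lower bound on $\kappa_n$, negative entries can be comparable to $\kappa_1$, down to $-\frac{n-k}{k}\kappa_1$ (cf. Lemma \ref{sigma_k properties 2}). Handling exactly these configurations is what makes the inequality hard. It is also reflected in the admissible range \eqref{gamma} falling below $1$ when $2k<n$.

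The smallness you invoke is a semi-convex phenomenon: it uses $\kappa_n\geq-A$, the hypothesis of Lemma \ref{the concavity inequality 2}. That is why the paper needs Lemma \ref{the claim} before it can use that lemma. It is also why Lemma \ref{the concavity inequality 1}, which carries no such hypothesis, is taken from \cite{Yan} rather than proved.

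To close the gap you would have to reproduce Yan's analysis of the $\xi_1$-coefficient for vectors with large negative entries.
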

\begin{proof}
See \cite[Theorem 1.1 and Corollary 2.3]{Yan}.
\end{proof}

\begin{lemma} \label{the concavity inequality 2}
Let $n \geq 3$ and $2 \leq k \leq n-1$. Given two positive constants $0 < \psi_0 \leq \psi_1 < \infty$, assume the set
\[\widetilde{\Gamma}_k:=\{\kappa \in \Gamma_k: \psi_0 \leq \sigma_k(\kappa) \leq \psi_1\}\] is non-empty. In addition, assume the vector $\kappa \in \widetilde{\Gamma}_k$ is ordered with multiplicity $m \geq 2$ in the sense that
\[\kappa_1 = \cdots = \kappa_m > \kappa_{m+1} \geq \cdots \geq \kappa_n,\] and
\[\kappa_n \geq -A\] for some constant $A>0$.
For a small enough $\delta_0 \in (0,1)$, there exist positive constants $\eta_{*}$ and $K$ whose values depend only on $n$, $k$, $A$, $\delta_0$, $\psi_0$ and $\psi_1$, such that if 
\[\kappa_1 \geq \eta_{*}>0,\] then
\begin{gather} \label{concavity 2}
\begin{split}
& -\sum_{p \neq q} \frac{\sigma_{k}^{pp,qq} \xi_p\xi_q}{\kappa_1} -(1+\delta_0) \frac{\sigma_{k}^{11} \xi_{1}^2}{\kappa_{1}^2}+2\sum_{i>m} \frac{\sigma_{k}^{ii}\xi_{i}^2}{\kappa_1(\kappa_1-\kappa_i)}\\
\geq &\ - \frac{K}{\kappa_1 \sigma_k} \left(\sum_{i=1}^{n} \sigma_{k}^{ii}\xi_{i}\right)^2
\end{split}
\end{gather} for any $\xi \in \bR^n$ with
\[\xi_2 = \cdots = \xi_m = 0,\] where the $\sigma_k$ and its derivatives are evaluated at the given vector $\kappa \in \widetilde{\Gamma}_k$. If $m=1$, then the inequality \eqref{concavity 2} holds for all $\xi \in \bR^n$.
\end{lemma}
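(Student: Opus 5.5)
The statement is Lemma \ref{the concavity inequality 2}, the concavity inequality of Hong and Zhang under the lower bound $\kappa_n \geq -A$. I would prove it by splitting the quadratic form into the contribution of the largest eigenvalue and the rest, and using $\kappa_n\ge -A$ to show that every index $i>m$ is either ``large'' (comparable to $\kappa_1$) or bounded.

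First I would rewrite the mixed second-derivative term with the identity for $\sigma_k(\kappa)$ as a function of the pair $(\kappa_1,\kappa_i)$. Since $\sigma_k^{1i,1i}$-type terms satisfy $\sigma_k^{11,ii}=\sigma_{k-2}(\kappa|1i)$, I would use
\[
\sigma_k^{ii}=\sigma_{k-1}(\kappa|1i)+\kappa_1\sigma_{k-2}(\kappa|1i),
\]
which gives $\sigma_{k-2}(\kappa|1i)\le \sigma_k^{ii}/\kappa_1$ whenever $\sigma_{k-1}(\kappa|1i)\ge 0$. The cross terms $-2\sum_{i\ne1}\sigma_k^{11,ii}\xi_1\xi_i/\kappa_1$ are then bounded by Cauchy--Schwarz. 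One part is absorbed by the good term $2\sum_{i>m}\sigma_k^{ii}\xi_i^2/(\kappa_1(\kappa_1-\kappa_i))$. The other part is a multiple of $\sigma_k^{11}\xi_1^2/\kappa_1^2$, and its coefficient must stay below $1+\delta_0$ after the $-(1+\delta_0)$ term is moved over.

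The key step is to control $\sum_{i>m}\sigma_{k-2}(\kappa|1i)^2(\kappa_1-\kappa_i)\kappa_1/\sigma_k^{ii}$, relative to $\sigma_k^{11}/\kappa_1^2$, by $(1+o(1))$ times something. Here the lower bound enters. For $i$ with $\kappa_i\le \delta\kappa_1$, $\kappa_1-\kappa_i\le (1+A/\kappa_1)\kappa_1$, which is $\kappa_1(1+o(1))$ as $\kappa_1\ge\eta_*\to\infty$. For $i$ with $\kappa_i>\delta\kappa_1$, the usual $\sigma_k$-ordering estimates (Lemma \ref{sigma_k properties 2}, $\kappa_1\sigma_k^{11}\ge c\,\sigma_k$) give that such terms are of lower order.

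The remaining pieces are the $p,q\ge2$ block of $-\sum\sigma_k^{pp,qq}\xi_p\xi_q$ and the $\xi_1$ cross terms. I would handle these through the standard concavity of $\sigma_k^{1/k}$. This gives
\[
-\sum_{p\ne q}\sigma_k^{pp,qq}\xi_p\xi_q\ge -\Bigl(1-\tfrac1k\Bigr)\frac{(\sum\sigma_k^{ii}\xi_i)^2}{\sigma_k}+\sum_p\sigma_k^{pp,pp}\xi_p^2,
\]
and the last sum vanishes. Hence at worst we lose $C(\sum\sigma_k^{ii}\xi_i)^2/(\kappa_1\sigma_k)$, which is exactly the right-hand side with constant $K$. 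The genuinely new negative contribution $-\delta_0\sigma_k^{11}\xi_1^2/\kappa_1^2$ must be absorbed by the surplus from the $i$ with $\kappa_i$ bounded: there $(\kappa_1-\kappa_i)^{-1}\ge(\kappa_1+A)^{-1}$ and $\sigma_k^{ii}\ge c\,\sigma_{k-1}\gg\sigma_k^{11}$. When all $\xi_i$ with $i>m$ are small, I would instead express $\xi_1$ through $\sum\sigma_k^{ii}\xi_i$ and charge it to $K$.

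For the case $m\ge2$ with $\xi_2=\dots=\xi_m=0$ the same computation goes through verbatim, since indices $2,\dots,m$ simply drop out. The main obstacle I expect is the precise bookkeeping in the key step. The Cauchy--Schwarz splitting must produce coefficient at most $1+\delta_0$ on $\sigma_k^{11}\xi_1^2/\kappa_1^2$ uniformly. This needs a careful case split on whether $\sigma_{k-1}(\kappa|1i)$ is negative, and there I would use $\kappa_n\ge -A$ to bound $|\sigma_{k-1}(\kappa|1i)|\le C(A)\sigma_{k-2}(\kappa|1i)$. Since the paper cites this lemma from Hong--Zhang, in practice I would defer to \cite{Hong-Zhang, Zhang} for the detailed constants.
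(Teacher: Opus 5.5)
\textbf{There is a genuine gap.} Your self-contained derivation fails at the step you call key, and the failure is structural.

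\emph{The Cauchy--Schwarz step.} Suppose you bound the cross terms $-\frac{2}{\kappa_1}\sum_{i>m}\sigma_{k-2}(\kappa|1i)\xi_1\xi_i$ in absolute value against the good term $2\sum_{i>m}\frac{\sigma_k^{ii}\xi_i^2}{\kappa_1(\kappa_1-\kappa_i)}$. The best splitting still leaves a negative $\xi_1^2$-term with coefficient $\frac{1}{2\kappa_1}\sum_{i>m}\frac{\sigma_{k-2}(\kappa|1i)^2(\kappa_1-\kappa_i)}{\sigma_k^{ii}}$. This is not comparable to $\sigma_k^{11}/\kappa_1^2$. Take the convex point $\kappa=(\kappa_1,a,\dots,a)$ normalized by $\sigma_k(\kappa)=1$, so $a\to0$ as $\kappa_1\to\infty$. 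There the ratio of the two is about $(k-1)\kappa_1/(2a)\to\infty$. On the set where $S:=\sum_i\sigma_k^{ii}\xi_i=0$, nothing in your scheme pays for this.

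\emph{The concavity step.} Concavity of $\sigma_k^{1/k}$ does not help either.
\begin{itemize}
\item Applied to the whole form, it uses up the cross terms. The good term alone must then beat $(1+\delta_0)\sigma_k^{11}\xi_1^2/\kappa_1^2$ on $\{S=0\}$. By Cauchy--Schwarz this amounts to $2\kappa_1\sigma_k^{11}\ge(1+\delta_0)\bigl((n-k+1)\kappa_1\sigma_{k-1}-k\sigma_k\bigr)$, which is false at the same point.
\item Applied only to the $p,q\ge2$ block, the loss is $(1-\frac1k)\frac{(S-\sigma_k^{11}\xi_1)^2}{\kappa_1\sigma_k}$, not $CS^2/(\kappa_1\sigma_k)$. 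At $S=0$ it equals $(1-\frac1k)\frac{\kappa_1\sigma_k^{11}}{\sigma_k}\cdot\frac{\sigma_k^{11}\xi_1^2}{\kappa_1^2}$. The factor $\kappa_1\sigma_k^{11}/\sigma_k$ is unbounded in $A$: for $n=3$, $k=2$, $\kappa=(\kappa_1,\kappa_2,-A)$ with $\kappa_2\to A$, it tends to $1+A^2/\sigma_2$.
\end{itemize}

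\emph{The source of the $\delta_0$ surplus.} Your proposed source is backwards. Converting $\xi_1$ into the $\xi_i$ via $\sigma_k^{11}\xi_1=S-\sum_{i>1}\sigma_k^{ii}\xi_i$ costs a factor $\sum_{i>1}\sigma_k^{ii}/\sigma_k^{11}$. So $\sigma_k^{ii}\gg\sigma_k^{11}$ hurts rather than helps.

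\emph{The missing idea.} The $\xi_1\xi_i$ cross terms are the source of positivity, not an error to be absorbed. Write $\sigma_{k-2}(\kappa|1i)=\bigl(\sigma_k^{ii}-\sigma_{k-1}(\kappa|1i)\bigr)/\kappa_1$ exactly and use $\sum_{i>1}\sigma_k^{ii}\xi_i=S-\sigma_k^{11}\xi_1$. Then the cross terms contribute $+2\sigma_k^{11}\xi_1^2/\kappa_1^2$, modulo two kinds of remainder:
\begin{itemize}
\item terms in $S$, which can be charged to $KS^2/(\kappa_1\sigma_k)$ using $\kappa_1\sigma_k^{11}\ge\frac kn\sigma_k$;
\item non-negligible errors involving $\sigma_{k-1}(\kappa|1i)$, which must be absorbed by the good term. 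This is where $\kappa_n\ge-A$ is genuinely used.
\end{itemize}
The $p,q\ge2$ block is, to leading order, $\kappa_1$ times the Hessian form of $\sigma_{k-1}$ at $\kappa|1\in\Gamma_{k-1}$. By concavity of $\sigma_{k-1}^{1/(k-1)}$ it costs only roughly $(1-\frac{1}{k-1})\sigma_k^{11}\xi_1^2/\kappa_1^2$.

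\emph{What the paper does.} The paper does not carry any of this out. It cites Zhang's Lemma 1.1 for $m=1$. It reduces $m\ge2$ to that case by applying it to $\kappa^\varepsilon=(\kappa_1,\kappa_2-\varepsilon,\dots,\kappa_m-\varepsilon,\kappa_{m+1},\dots,\kappa_n)$.
\begin{itemize}
\item For small $\varepsilon$, $\kappa^\varepsilon$ lies in $\{\psi_0/2\le\sigma_k\le2\psi_1\}$, so the constants are uniform in $\varepsilon$.
\item The terms carrying $1/(\kappa_1-\kappa_i^\varepsilon)=1/\varepsilon$ for $2\le i\le m$ vanish because $\xi_i=0$.
\item One then lets $\varepsilon\to0$.
\end{itemize}
If you defer to the literature, as you ultimately do, your remark that the $m\ge2$ case goes through ``verbatim'' has to be replaced by this perturbation argument. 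The paper uses the cited lemma only in the case $m=1$.
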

\begin{proof}
When $m=1$, the inequality follows from \cite[Lemma 1.1]{Zhang}. If $m \geq 2$, apply the result for $m=1$ to
\[\kappa^{\varepsilon}:=(\kappa_1,\kappa_2-\varepsilon, \ldots,\kappa_m-\varepsilon,\kappa_{m+1},\ldots,\kappa_n)\] and any vector $\xi \in \bR^n$ with
\[\xi_2=\cdots=\xi_m=0.\] The vector $\kappa^{\varepsilon}$ belongs to 
\[\widetilde{\Gamma}_{k}^{'}=\left\{\kappa \in \Gamma_k: \frac{\psi_0}{2} \leq \sigma_k(\kappa) \leq 2\psi_1\right\}\]
if $\varepsilon=\varepsilon(\kappa,\psi_0,\psi_1)>0$ is taken to be small enough. The inequality for $m\geq 2$ then follows.
\end{proof}

In practical applications, the constants $\psi_0$ and $\psi_1$ would be taken to depend on the right-hand side of our equation.
\subsection{The elliptic equation and admissible solutions}
\indent

For a symmetric matrix $A=(a_{ij})$ and a function
\[F: \{\text{symmetric matrices}\} \to \bR,\] we define
\begin{equation}
F^{ij}(A)=\frac{\partial F}{\partial a_{ij}}, \quad F^{ij,rs}(A)=\frac{\partial^2 F}{\partial a_{ij} \partial a_{rs}}.\label{derivatives of F}
\end{equation}
When $F$ is of the form
\[F(A)=f(\lambda(A))\] for some symmetric function $f$ of $n$ variables, where $\lambda(A)=(\lambda_1,\ldots,\lambda_n)$ are the eigenvalues of $A$, the function $F$ is as smooth as $f$ and is concave if $f$ is concave. 
When $A$ is diagonal, we have $F^{ij}=f_i\delta_{ij}$ where
\begin{equation}
f_i=\frac{\partial f}{\partial \lambda_i}.\label{derivatives of f}
\end{equation} Moreover, we have
\[\sum_{i,j} F^{ij}a_{ij}=\sum_{i=1}^{n} f_i(\lambda(A))\lambda_i, \quad \sum_{i,j,k} F^{ij}a_{ik}a_{jk}=\sum_{i=1}^{n} f_i(\lambda(A))\lambda_{i}^2.\]

In this note, we are considering equations of exactly this form, where the function 
\[f(\kappa)=H_{k}^{1/k}(\kappa)=\left[\binom{n}{k}^{-1}\sigma_k(\kappa)\right]^{1/k}\]
and the matrix $A=\{a_{ij}\}$ is given by
\[a_{ij}=\frac{1}{w}(\delta_{ij}+u\gamma^{ik}u_{kl}\gamma^{lj}),\] where
\[w=\sqrt{1+|Du|^2}, \quad \gamma_{ij}=\delta_{ij}+\frac{u_iu_j}{1+w}, \quad \gamma^{ij}=\delta_{ij}-\frac{u_iu_j}{w(1+w)}.\] The eigenvalues of the matrix $A$ are the hyperbolic principal curvatures $\kappa[\Sigma]$ of the graph $\Sigma=(x,u(x))$. The prescribed curvature relation \eqref{req1} translates to a fully nonlinear equation of the form
\[G(D^2u,Du,u)=\sigma \quad \text{in $\Omega$},\] which is elliptic and concave in $D^2u$ at $k$-admissible solutions.

\begin{definition}
We say a smooth hypersurface $\Sigma$ in $\bH^{n+1}$ is $k$-admissible if its hyperbolic principal curvatures $\kappa[\Sigma] \in \Gamma_k$ everywhere on $\Sigma$. Moreover, in the present setting, we say a smooth, positive function $u$ is $k$-admissible in a domain $\Omega$ if the vertical graph $\Sigma=(x,u(x))$ over $\Omega$ is  a smooth $k$-admissible hypersurface in $\bH^{n+1}$.
\end{definition}

\section{The curvature estimate for $2k \leq n$} \label{2k<=n proof}
In this section, we prove Theorem \ref{the theorem} for the case when
\[\frac{3-\sqrt{5}}{2}<\frac{2k}{n} \leq 1.\]
As we have discussed in the Introduction, it is sufficient to derive, for $k$-admissible graphs, $\varepsilon$-independent curvature estimates that hold for all $\sigma \in (0,1)$; the rest will follow from the general framework of Guan-Spruck \cite{JEMS}. 

The proof consists of several ingredients: 
\begin{enumerate}
\item A concavity inequality due to Yan \cite{Yan} for the $\sigma_k$ operator in $\Gamma_k$ which works well for $2k>n$ but loses its effectiveness when $2k \leq n$;
\item A concavity inequality due to Zhang \cite{Zhang} for the $\sigma_k$ operator that can be effective for all $k$ but assumes a semi-convexity condition;
\item The general derivation of Lu \cite{Lu-PAMS} which assumes a concavity inequality of the Ren-Wang \cite{Ren-Wang-1, Ren-Wang-2} type is available;
\item A reduction technique developed by us in \cite{Bin-Adv}.
\end{enumerate}

With Yan's inequality, we apply our reduction technique to show that the semi-convexity condition can indeed be achieved at the maximum point. Then, by the semi-convexity condition, we apply Zhang's concavity inequality which is of the Ren-Wang type to eliminate the largely negative third order term. After that, we carry out Lu's argument to eliminate another largely negative term. This last step is the only place where we will have to impose the condition
\[\frac{3-\sqrt{5}}{2}<\frac{2k}{n} \leq 1,\] otherwise our approach would work for all $2k \leq n$ in general.

For the proof, in order to conveniently apply the concavity inequalities, we work with the $\sigma_k$ operator instead of $H_{k}^{1/k}$.

\begin{theorem}
Let $n \geq 3$ and $2 \leq k \leq n$ satisfy 
\[\frac{3-\sqrt{5}}{2}<\frac{2k}{n} \leq 1.\]
Suppose $\Omega \subseteq \bR^n$ is a smooth bounded domain with a mean-convex boundary and $\sigma \in (0,1)$. For $f=H_{k}^{1/k}$ and $\Gamma=\Gamma_{k}$ in \eqref{req1}-\eqref{req2}, let $u$ be a smooth solution of the approximate Dirichlet problem \eqref{the approximate problem} whose vertical graph $\Sigma=(x,u(x))$ over the domain $\Omega$ is $k$-admissible. Then its largest principal curvature
\[\kappa_{\max}(x):=\max_{1 \leq i \leq n} \kappa_i(x)\] satisfies
\[\max_{\Omega} \kappa_{\max} \leq C \left(1+ \max_{\partial \Omega}\kappa_{\max}\right)\] for some universal constant $C>0$ that depends only on $n$, $k$ and $\sigma$, but is independent of $\varepsilon$.
\end{theorem}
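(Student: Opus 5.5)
We will argue by the maximum principle applied to a Lu-type auxiliary function on $\Sigma$. Concretely, we take
\[
\Phi=\log\kappa_{\max}-\log\left(\nu^{n+1}-a\right)+\beta\log u
\]
or a close variant. Here $a>0$ is chosen so that $\nu^{n+1}-a$ stays bounded below on $\Sigma$; the standard gradient and boundary estimates of Guan--Spruck give $\nu^{n+1}\ge c(\sigma)>0$ near the boundary, which makes this possible. The constant $\beta$ is a parameter to be fixed later. As in \cite{Lu-PAMS}, the weight in $\nu^{n+1}$ rather than the Guan--Spruck weight is what produces the good term $\sum\sigma_k^{ii}\kappa_i^2$. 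We write $F=\sigma_k$ and $\sigma_k=\binom nk\sigma^k$. Suppose the maximum of $\Phi$ is attained at an interior point with $\kappa_1$ large and of multiplicity $m$. We then differentiate a smooth perturbation of $h_{11}$ twice, using the frame conventions of Lemma \ref{the concavity inequality 1}. We commute covariant derivatives via the Gauss and Codazzi equations in $\bH^{n+1}$ and use \eqref{geometric formula} for $\nabla\nu^{n+1}$ and $\nabla u$. This yields the usual inequality at the maximum point. Its right-hand side contains the third order terms
\[
-\sum_{p\ne q}\sigma_k^{pp,qq}h_{pp1}h_{qq1}/\kappa_1,\qquad 2\sum_{i>m}\sigma_k^{ii}h_{11i}^2/(\kappa_1(\kappa_1-\kappa_i)),\qquad -\sigma_k^{ii}h_{11i}^2/\kappa_1^2 .
\]
It also contains the positive term $\sum\sigma_k^{ii}\kappa_i^2/(\nu^{n+1}-a)$, together with the hyperbolic negative terms $-C\kappa_1\sum\sigma_k^{ii}$, $-C\sum\sigma_k^{ii}$, and gradient cross terms from $\nabla\Phi=0$.

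\textbf{Step 1 (reduction to semi-convexity).} Following \cite{Bin-Adv}, we first show that at the maximum point either $\kappa_1$ is already bounded, or $\kappa_n\ge -A$ for a constant $A$ depending only on $n,k,\sigma$. We apply Lemma \ref{the concavity inequality 1} with $\xi_i=h_{ii1}$ and $\gamma_0$ slightly below $2k/n$. The critical-point equation $h_{11i}/\kappa_1=\nabla_i\nu^{n+1}/(\nu^{n+1}-a)-\beta\nabla_iu/u$, together with $\sum\sigma_k^{ii}h_{ii1}=\nabla_1\sigma_k=0$, then absorbs the third order terms up to a loss $(1-\gamma_0)\sum_i\sigma_k^{ii}h_{11i}^2/\kappa_1^2$. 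That loss is controlled by $C\sum_i\sigma_k^{ii}\kappa_i^2$ via \eqref{geometric formula}. Suppose $\kappa_n\le -A$ with $A$ large. Then Lemma \ref{sigma_k properties 2} gives $\sigma_k^{nn}\ge c\sigma_{k-1}$, so the good term contains $c\,\sigma_{k-1}A^2$. This dominates the hyperbolic bad terms, which are of order $\kappa_1\sigma_{k-1}$, once we compare $A$ with $\kappa_1$ using $\kappa_n>-\frac{n-k}{k}\kappa_1$. The comparison gives a contradiction unless $\kappa_1\le C$, so we may assume $\kappa_n\ge -A$.

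\textbf{Step 2 (Zhang's inequality and Lu's argument).} With $\kappa_n\ge-A$ in hand, we apply Lemma \ref{the concavity inequality 2} with coefficient $1+\delta_0$. This removes the entire term $-\sigma_k^{ii}h_{11i}^2/\kappa_1^2$ for $i=1$, leaving a positive surplus $\delta_0\sigma_k^{11}h_{111}^2/\kappa_1^2$. The constant $K$ multiplies $(\nabla_1\sigma_k)^2=0$, so it does no harm. The remaining negative terms are those with $i\ne1$ coming from $|\nabla\Phi|^2$. We treat them as in \cite{Lu-PAMS}: we substitute the critical-point equation and expand the square. The dangerous term is roughly $-\frac{C}{(\nu^{n+1}-a)^2}\sum_i\sigma_k^{ii}\kappa_i^2\,|\tilde\nabla u|^2$. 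It must be compared with $\frac{1}{\nu^{n+1}-a}\sum\sigma_k^{ii}\kappa_i^2$, with the term $\beta\sum\sigma_k^{ii}(\nabla_iu)^2/u^2$, and with the lower bound $\sum_i\sigma_k^{ii}\kappa_i^2\ge \theta\,\kappa_1\sigma_k$. In the semi-convex regime the constant $\theta$ is quantified through $2k-n$ and the bound $\kappa_n\ge-A$.

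\textbf{Main obstacle.} We expect the final balancing in Step 2 to be the hardest part, namely choosing $a,\beta$ so that the leftover quadratic form in $(|\tilde\nabla u|^2,\nu^{n+1})$ is positive. Using $|\tilde\nabla u|^2=1-(\nu^{n+1})^2$, positivity reduces to a quadratic inequality in $t=\nu^{n+1}$, and its discriminant condition becomes $(2k/n)^2-3(2k/n)+1<0$, i.e.\ $2k/n>(3-\sqrt5)/2$. We would first try to optimize over $a$ and $\beta$ to reach exactly this threshold. Once positivity holds, the contradiction bounds $\kappa_1$ at interior maxima, and comparing $\Phi$ with its boundary values gives the stated estimate.
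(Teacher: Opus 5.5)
The gap is in your auxiliary function, and it is not cosmetic.

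\textbf{The $\beta\log u$ term.} Any $\beta\neq0$ destroys $\varepsilon$-independence. Since $u=\varepsilon$ on $\partial\Omega$ while $u$ is of order one inside, comparing $\Phi$ at an arbitrary point with its maximum (interior or on $\partial\Omega$) leaves a factor of size $(\max u/\varepsilon)^{|\beta|}$ in the bound for $\kappa_{\max}$. This is exactly the obstruction described in Remark~\ref{the degeneracy}. Your Step~2 relies on the term $\beta\sum F^{ii}(\nabla_iu)^2/u^2$, so you cannot simply set $\beta=0$.

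\textbf{The weight $(\nu^{n+1}-a)^{-1}$.} This is the Guan--Spruck weight, not Lu's. With $\beta=0$, the critical-point equation makes $-F^{ii}h_{11i}^2/\kappa_1^2$ cancel exactly against $F^{ii}(\nabla_i\nu^{n+1})^2/(\nu^{n+1}-a)^2$. So there is no $(1-\gamma_0)$-loss, and the concavity inequalities are not where the difficulty lies. After the Gauss equation, apart from terms of good sign, what remains is $\frac{a}{\nu^{n+1}-a}\sum_iF^{ii}(1+\kappa_i^2)$ set against the cross term $\frac{2}{\nu^{n+1}-a}\sum_iF^{ii}\frac{u_i^2}{u^2}(\kappa_i-\nu^{n+1})$.
\begin{itemize}
\item Even after spending $2\sum_{i>m}F^{ii}h_{11i}^2/(\kappa_1(\kappa_1-\kappa_i))$, indices with $a<\kappa_i<\nu^{n+1}$ leave a residue of order $-\frac12F^{ii}u_i^2/u^2$, which does not shrink with $\sigma$.
\item Positivity of $\nu^{n+1}-a$ forces $a<\min\nu^{n+1}$, and $\nu^{n+1}$ is close to $\sigma$ near $\partial\Omega$, so essentially $a<\sigma$.
\end{itemize}
This balance is not uniform as $\sigma\to0$. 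It is the kind of restriction that produced $\sigma>\sigma_0$ in \cite{JEMS}, so your scheme would at best give a threshold in $\sigma$, not in $2k/n$.

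\textbf{Step 1.} Your dichotomy does not close as written. The inequality $\kappa_n>-\frac{n-k}{k}\kappa_1$ bounds $|\kappa_n|$ from above by $\kappa_1$, so it cannot make $c\,\sigma_{k-1}A^2$ dominate terms of size $\kappa_1\sigma_{k-1}$ (think of $|\kappa_n|\sim\kappa_1^{1/2}$). In fact no negative term of that size arises: the $\kappa_1$-term $kF\kappa_1$ from the Gauss equation has a good sign.

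\textbf{The missing idea.} It is Lu's weight $Q=\kappa_{\max}/(\nu^{n+1})^N$ with a \emph{moderate} exponent $N$.
\begin{itemize}
\item It gives the $\sigma$-independent coefficient $N-1$ in front of $\sum F^{ii}(1+\kappa_i^2)$, at the price of a genuine loss $-(1-\frac1N)F^{ii}h_{11i}^2/\kappa_1^2$.
\item The $i=1$ part of $N\sum F^{ii}(\nabla_i\nu^{n+1})^2/(\nu^{n+1})^2$ equals $\frac1NF^{11}h_{111}^2/\kappa_1^2$. It covers the deficit $1-\gamma_0$ left by Yan's inequality exactly when $N\le\frac1{1-\gamma_0}$; the $i>m$ part covers the deficit $\frac{2k-n}{n}$.
\item All remaining negative terms are then $O(\sum F^{ii})$. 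So $\frac{N-1}{2}F^{nn}\kappa_n^2\le C\sum F^{ii}$, together with $F^{nn}\ge c\sum F^{ii}$, gives $\kappa_n\ge-C$ (Lemma~\ref{the claim}).
\item Zhang's inequality then removes the first line. Using $\frac{\kappa_1+\kappa_i}{\kappa_1-\kappa_i}\ge1-\delta$ and the critical-point equation, the $i>m$ terms and the cross term combine into $\sum_{i>m}F^{ii}\frac{u_i^2}{u^2}(a_0t^2+b_0t)\ge-\frac{b_0^2}{4a_0}\sum F^{ii}$, with $a_0=(1-\delta)N^2+N$ and $b_0=2N$.
\item Absorbing this by $(N-1)\sum F^{ii}$ requires, as $\delta\to0$, $N^2-N-1>0$, i.e.\ $N>\frac{1+\sqrt5}{2}$. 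This is compatible with $N\le\frac1{1-\gamma_0}$ and $\gamma_0<\frac{2k}{n}$ precisely when $\frac{2k}{n}>\frac{3-\sqrt5}{2}$.
\end{itemize}

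So the threshold is a quadratic condition on the exponent $N$. Your quadratic in $t=\nu^{n+1}$ is asserted to land on the right discriminant but is not derived from any computation.
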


\begin{proof}
We consider the test function
\[Q= \frac{\kappa_{\max}}{(\nu^{n+1})^{N}},\] where $N>1$ is some constant to be determined later. By the mean-convexity assumption on $\partial \Omega$, we have \cite[Proposition 4.1]{JEMS} that $\nu^{n+1} \geq \sigma$ everywhere on $\Sigma$.

Suppose $Q$ attains its maximum at some interior $X_0 \in \Sigma$. Let $\{\tau_1,\ldots,\tau_n\}$ be a local orthonormal frame around $X_0$ such that the second fundamental form $h_{ij}(X_0)=\kappa_i(X_0)\delta_{ij}$ is diagonal and the principal curvatures are ordered as
\[\kappa_{\max}(X_0)=\kappa_1 (X_0) \geq  \cdots \geq \kappa_n(X_0).\] 
In case $\kappa_1$ has multiplicity $m>1$, i.e.,
\[\kappa_1=\cdots=\kappa_m>\kappa_{m+1}\geq \cdots \geq \kappa_n,\] we can define a smooth function $\varphi(X)$ such that
\[Q(X_0)=\frac{\varphi(X)}{(\nu^{n+1}(X))^N}.\] It follows that $\varphi \geq \kappa_1$ everywhere and $\varphi=\kappa_1$ at $X_0$. By a smooth approximation lemma \cite[Lemma 5]{BCD} due to Brendle-Choi-Daskalopoulos, we have
\begin{align}
\delta_{kl}\cdot \nabla_i\ \varphi &=\nabla_i h_{kl}, \quad 1 \leq k,l \leq m, \label{1st order approximation}\\
\nabla_i\nabla_i\ \varphi&\geq \nabla_i\nabla_ih_{11}+2\sum_{p>m}\frac{(\nabla_ih_{1p})^2}{\kappa_1-\kappa_p}, \nonumber
\end{align} at the point $X_0$. From here onwards, all calculations will be done at the point $X_0$ without explicitly saying so, and we will use standard shorthand notations for the covariant derivatives, e.g., $h_{ijk}=\nabla_k h_{ij}$, $h_{ijkl}=\nabla_l\nabla_k h_{ij}$ and so on.
Now, since the smooth function
\[\tilde{Q}(X):=\frac{\varphi(X)}{(\nu^{n+1})^N}\] has the constant value $Q(X_0)$, we have that
\begin{align}
0=(\log \tilde{Q})_i&=\frac{\varphi_i}{\varphi} - N\frac{\nabla_i\nu^{n+1}}{\nu^{n+1}} \nonumber \\
&= \frac{h_{11i}}{\kappa_1}-N\frac{\nabla_i \nu^{n+1}}{\nu^{n+1}}, \label{1st order critical}
\end{align} and
\begin{align}
0=&\ (\log \tilde{Q})_{ii} \nonumber\\
=&\ \frac{\varphi_{ii}}{\varphi}-\frac{\varphi_{i}^2}{\varphi^2} - N\frac{\nabla_{ii}\nu^{n+1}}{\nu^{n+1}} + N \frac{(\nabla_{i}\nu^{n+1})^2}{(\nu^{n+1})^2} \nonumber\\
\geq &\ \frac{h_{11ii}}{\kappa_1}+2\sum_{p>m}\frac{h_{1pi}^2}{\kappa_1(\kappa_1-\kappa_p)}-\frac{h_{11i}^2}{\kappa_{1}^2}- N\frac{\nabla_{ii}\nu^{n+1}}{\nu^{n+1}} + N \frac{(\nabla_{i}\nu^{n+1})^2}{(\nu^{n+1})^2}. \label{2nd order critical 1}
\end{align} 

\begin{remark}
In case $m=1$, we would immediately get \eqref{1st order critical} and \eqref{2nd order critical 1}.
\end{remark}

Contracting \eqref{2nd order critical 1} with $F^{ii}=\sigma_{k}^{ii}$, we have that
\begin{gather} \label{2nd order critical 2}
\begin{split}
0 &\geq  \sum_{i=1}^{n} \frac{F^{ii}h_{11ii}}{\kappa_1}+2\sum_{i=1}^{n} \sum_{p>m}\frac{F^{ii}h_{1pi}^2}{\kappa_1(\kappa_1-\kappa_p)}-\sum_{i=1}^{n} \frac{F^{ii}h_{11i}^2}{\kappa_{1}^2}  \\
&\quad - \frac{N}{\nu^{n+1}}\sum_{i=1}^{n} F^{ii}\nabla_{ii}\nu^{n+1}+N\sum_{i=1}^{n} \frac{F^{ii}(\nabla_i\nu^{n+1})^2}{(\nu^{n+1})^2}.
\end{split}
\end{gather} We shall first deal with the term $h_{11ii}$. Since $\bH^{n+1}$ has constant sectional curvature $-1$, by the Codazzi and Gauss equations, we have $h_{ijk}=h_{ikj}$ and 
\[h_{11ii}=h_{ii11}+h_{11}^2h_{ii}-h_{11}h_{ii}^2-h_{11}+h_{ii}.\] Hence,
\begin{equation}
\sum_{i=1}^{n} F^{ii}h_{11ii}=\sum_{i=1}^{n} F^{ii}h_{ii11}-\kappa_1\left(\sum_{i=1}^{n} F^{ii}\kappa_{i}^2 + \sum_{i=1}^{n} F^{ii} \right) + (\kappa_{1}^2+1)kF,\label{expansion of the fourth order term}
\end{equation} where we have used Lemma \ref{sigma_k properties 1} to get
\[
\sum_{i=1}^{n} F^{ii}h_{ii}=\sum_{i=1}^{n} \sigma_{k}^{ii}\kappa_i=k\sigma_k.
\] 
Next, to handle the term $h_{ii11}$, we differentiate the curvature equation twice, we obtain that
\begin{equation}
\sum_{i=1}^{n} F^{ii}h_{ii1}=0 \label{differentiate once}
\end{equation} and
\begin{equation}
\sum_{p,q,r,s} F^{pq,rs}h_{pq1}h_{rs1} + \sum_{i=1}^{n} F^{ii}h_{ii11}=0. \label{differentiate twice}
\end{equation}
Substituting \eqref{expansion of the fourth order term} and \eqref{differentiate twice} back into \eqref{2nd order critical 2}, we obtain that
\begin{gather} \label{2nd order critical 3}
\begin{split}
0 &\geq  -\sum_{p,q,r,s} \frac{F^{pq,rs}h_{pq1}h_{rs1}}{\kappa_1} - \left(\sum_{i=1}^{n} F^{ii}\kappa_{i}^2+\sum_{i=1}^{n} F^{ii}\right) + C(n,k,\sigma) \kappa_1\\
&\quad +2\sum_{i=1}^{n} \sum_{p>m}\frac{F^{ii}h_{1pi}^2}{\kappa_1(\kappa_1-\kappa_p)}-\sum_{i=1}^{n} \frac{F^{ii}h_{11i}^2}{\kappa_{1}^2}  \\
&\quad - \frac{N}{\nu^{n+1}}\sum_{i=1}^{n} F^{ii}\nabla_{ii}\nu^{n+1}+N\sum_{i=1}^{n} \frac{F^{ii}(\nabla_i\nu^{n+1})^2}{(\nu^{n+1})^2}.
\end{split}
\end{gather} Using \eqref{2nd derivatives for sigma k} and \eqref{2nd derivative for sigma k 2} along with the symmetry and the Codazzi properties of $h_{ij}$, we may further expand the $h_{ii11}$ term as
\begin{gather} \label{expansion 1}
\begin{split}
\sum_{i=1}^{n} F^{ii}h_{ii11}&=-\sum_{p,q,r,s} F^{pq,rs}h_{pq1}h_{rs1} \\
&=-\sum_{p\neq q}F^{pp,qq}h_{pp1}h_{qq1}+\sum_{p \neq q}F^{pp,qq}h_{pq1}^2 \\
&\geq -\sum_{p\neq q}F^{pp,qq}h_{pp1}h_{qq1}+ 2 \sum_{i>m}\frac{F^{ii}-F^{11}}{\kappa_1-\kappa_i}h_{11i}^2,
\end{split}
\end{gather} where we have also used \eqref{1st order approximation} to deduce that
\[h_{i11}=\delta_{i1}\cdot \varphi_1 = 0 \quad \text{for $1<i \leq m$}.\] On the other hand, we may expand the $h_{1pi}$ term as follows,
\begin{gather} \label{expansion 2}
\begin{split}
2\sum_{i=1}^{n} \sum_{p>m}\frac{F^{ii}h_{1pi}^2}{\kappa_1(\kappa_1-\kappa_p)} & \geq 2\sum_{p>m}\frac{F^{pp}h_{1pp}^2}{\kappa_1(\kappa_1-\kappa_p)}+2\sum_{p>m}\frac{F^{11}h_{1p1}^2}{\kappa_1(\kappa_1-\kappa_p)}\\
&=2\sum_{i>m}\frac{F^{ii}h_{ii1}^2}{\kappa_1(\kappa_1-\kappa_i)}+2\sum_{i>m}\frac{F^{11}h_{11i}^2}{\kappa_1(\kappa_1-\kappa_i)}.
\end{split}
\end{gather} Substituting \eqref{expansion 1} and \eqref{expansion 2} back into \eqref{2nd order critical 3}, we have
\begin{gather} \label{2nd order critical 4}
\begin{split}
0&\geq -\sum_{p \neq q} \frac{F^{pp,qq}h_{pp1}h_{qq1}}{\kappa_1}+2\sum_{i>m}\frac{F^{ii}h_{ii1}^2}{\kappa_1(\kappa_1-\kappa_i)}-\left(\sum_{i=1}^{n} F^{ii} + \sum_{i=1}^{n} F^{ii}\kappa_{i}^2\right)\\
&\quad +2\sum_{i>m}\frac{F^{ii}-F^{11}}{\kappa_1(\kappa_1-\kappa_i)}h_{11i}^2+2\sum_{i>m}\frac{F^{11}h_{11i}^2}{\kappa_1(\kappa_1-\kappa_i)}-\sum_{i=1}^{n} \frac{F^{ii}h_{11i}^2}{\kappa_{1}^2}\\
&\quad -\frac{N}{\nu^{n+1}}\sum_{i=1}^{n} F^{ii}\nabla_{ii} \nu^{n+1}+N\sum_{i=1}^{n} \frac{F^{ii}(\nabla_i\nu^{n+1})^2}{(\nu^{n+1})^2}+C(n,k,\sigma)\kappa_1.\\
\end{split}
\end{gather} 
For the $\nabla_{ii}\nu^{n+1}$ term, we proceed by a standard computation \cite[Lemma 2.1]{Lu-PAMS} to obtain that
\begin{align*}
&\ \sum_{i=1}^{n} F^{ii} \nabla_{ii}\nu^{n+1}\\
 =&\ 2\sum_{i=1}^{n} F^{ii}\frac{u_i}{u}\nabla_i\nu^{n+1}+kF[1+(\nu^{n+1})^2]\\
 & -\nu^{n+1}\left(\sum_{i=1}^{n} F^{ii}+\sum_{i=1}^{n} F^{ii}\kappa_{i}^2\right).
\end{align*} For the third order term $h_{11i}$, we expand
\[\sum_{i=1}^{n} \frac{F^{ii}h_{11i}^2}{\kappa_{1}^2}=\frac{F^{11}h_{111}^2}{\kappa_{1}^2}+\sum_{i>m}\frac{F^{ii}h_{11i}^2}{\kappa_{1}^2}\] by noting from \eqref{1st order approximation} that
\begin{equation}
h_{11i}=h_{1i1}=\delta_{1i}\cdot (\varphi)_{1} = 0 \quad \text{for $1<i \leq m$}. \label{approximation 2}
\end{equation}
\begin{remark}
The expansion is trivial when $m=1$.
\end{remark}
Therefore, from \eqref{2nd order critical 4}, we get
\begin{gather} \label{2nd order critical 5}
\begin{split}
0&\geq -\sum_{p \neq q} \frac{F^{pp,qq}h_{pp1}h_{qq1}}{\kappa_1} - \frac{F^{11}h_{111}^2}{\kappa_{1}^2}+2\sum_{i>m}\frac{F^{ii}h_{ii1}^2}{\kappa_1(\kappa_1-\kappa_i)} \\
&\quad +2\sum_{i>m}\frac{F^{ii}h_{11i}^2}{\kappa_1(\kappa_1-\kappa_i)}-\sum_{i>m}\frac{F^{ii}h_{11i}^2}{\kappa_{1}^2}+N\sum_{i=1}^{n} \frac{F^{ii}(\nabla_i\nu^{n+1})^2}{(\nu^{n+1})^2}\\
&\quad +(N-1)\left(\sum_{i=1}^{n} F^{ii} + \sum_{i=1}^{n} F^{ii}\kappa_{i}^2\right)-2N\sum_{i=1}^{n} F^{ii}\frac{u_i}{u}\frac{\nabla_i\nu^{n+1}}{\nu^{n+1}}\\
&\quad +C(n,k,\sigma)\kappa_1-C(n,k,\sigma) N.
\end{split}
\end{gather} 

Let us now pause for a moment to discuss the difficulties.

\subsection{The difficulties} \label{the difficulties}
\indent

In contrast to the Euclidean case, here we have an additional term,
\begin{equation}
-2N\sum_{i=1}^{n} F^{ii}\frac{u_i}{u}\frac{\nabla_i\nu^{n+1}}{\nu^{n+1}},\label{the subtlety 1}
\end{equation} which would cause substantial difficulties as we now explain. We may employ some of the terms in \eqref{2nd order critical 5} to have the following control
\begin{gather}\label{control the subtlety}
\begin{split}
&\ 2\sum_{i>m}\frac{F^{ii}h_{11i}^2}{\kappa_1(\kappa_1-\kappa_i)}-\sum_{i>m}\frac{F^{ii}h_{11i}^2}{\kappa_{1}^2}-2N\sum_{i=1}^{n} F^{ii}\frac{u_i}{u}\frac{\nabla_i\nu^{n+1}}{\nu^{n+1}} \\
\geq &\ -C\sum_{i=1}^{n} F^{ii}.
\end{split}
\end{gather}
In order to eliminate this term which could be largely negative, one natural thought would be to take a large $N>0$ in the third line of \eqref{2nd order critical 5}. However, this would lead to another trouble,
\begin{equation}
-\frac{F^{11}h_{111}^2}{\kappa_{1}^2} \sim -CN^2F^{11}\kappa_{1}^2 \quad \text{by \eqref{1st order critical}},\label{the subtlety 2}
\end{equation} where the coefficient $N^2$ would be even larger and the term
\begin{equation}
(N-1)F^{11}\kappa_{1}^2 \label{control the subtlety 2}
\end{equation} would not be enough for absorbing it.

\begin{remark} \label{the degeneracy}
One may also consider adding terms to the test function $Q$ so that it would produce a term of 
\[+C\sum_{i=1}^{n} F^{ii}.\] The problem is, most terms that could give rise to such a term will make the constant $C>0$ in \eqref{global C2 curvature} depend on 
\[\frac{1}{\varepsilon};\] see \eqref{global C2} for example. This would not allow us to the passage of limits for solving \eqref{the Dirichlet problem}. The degeneracy of the equation crucially causes such difficulties.
\end{remark}

This issue can be perfectly resolved when $2k>n$, by following Lu's argument in \cite{Lu-PAMS}. First, when $2k>n$, the concavity inequality \eqref{concavity 1} is as powerful as those of Ren-Wang \cite{Ren-Wang-1, Ren-Wang-2}, which yields
\begin{equation}
-\sum_{p \neq q} \frac{F^{pp,qq}h_{pp1}h_{qq1}}{\kappa_1} - \frac{F^{11}h_{111}^2}{\kappa_{1}^2}+2\sum_{i>m}\frac{F^{ii}h_{ii1}^2}{\kappa_1(\kappa_1-\kappa_i)} \geq 0, \label{concavity 3}
\end{equation} and so we would not need to worry about \eqref{the subtlety 2}. Secondly, by Lemma \ref{sigma_k properties 2}, we can derive that
\begin{equation}
2\sum_{i>m}\frac{F^{ii}h_{11i}^2}{\kappa_1(\kappa_1-\kappa_i)}-\sum_{i>m}\frac{F^{ii}h_{11i}^2}{\kappa_{1}^2} \geq \frac{2k-n}{n} \sum_{i>m} \frac{F^{ii}h_{11i}^2}{\kappa_{1}^2}. \label{the subtlety 3}
\end{equation} When $2k>n$, we would have a positive lower bound on the right-hand side of \eqref{the subtlety 3} and only then the control \eqref{control the subtlety} would follow. Finally, the desired estimate would readily follow by taking $N>0$ large in the third line of \eqref{2nd order critical 5}. This summarizes Lu's proof strategy in \cite{Lu-PAMS}.

Now, when $2k=n$, by the allowable range in \eqref{gamma}, we can still apply \eqref{concavity 1} to get the following slightly weaker control of \eqref{the subtlety 2},
\[-\sum_{p \neq q} \frac{F^{pp,qq}h_{pp1}h_{qq1}}{\kappa_1} - \frac{F^{11}h_{111}^2}{\kappa_{1}^2}+2\sum_{i>m}\frac{F^{ii}h_{ii1}^2}{\kappa_1(\kappa_1-\kappa_i)} \geq -\varepsilon \frac{F^{11}h_{111}^2}{\kappa_{1}^2}\] for a small $\varepsilon>0$. This can be suitably controlled by \eqref{control the subtlety 2} because now we have a small coefficient $\varepsilon$ in front of \eqref{the subtlety 2}. However, when $2k=n$, the lower bound for \eqref{the subtlety 3} degenerates to $0$ and we would not be able to use it for controlling \eqref{the subtlety 1}.

When $2k<n$, the situation is even worse: The concavity inequality \eqref{concavity 1} totally loses its effectiveness because of the allowable range \eqref{gamma} and the lower bound for \eqref{the subtlety 3} becomes negative which provides more negativity in addition to the trouble term \eqref{the subtlety 1}.

In other words, Lu's proof in \cite{Lu-PAMS} would not be suitable for handling the case $2k \leq n$. In order to get the estimate for that case, we have to supply new elements in accordance with Lu's proof. We shall introduce the novel reduction technique that was developed in Wang's investigation \cite{Bin-Adv} for the case $k=2, n \geq 4$. This technique would enable us to show that there is a lower bound for the principal curvatures,
\[\kappa_{n} \geq -C(n,k,\sigma,N) \quad \text{if we choose a suitable, non-large value for $N$}.\]

This reduces the proof to a ``semi-convex'' situation, in which case a powerful concavity inequality in the form \eqref{concavity 3} of Ren-Wang \cite{Ren-Wang-1, Ren-Wang-2} can hold true for all $k$. Moreover, with this curvature control from below, we would be able to retain a positive lower bound for \eqref{the subtlety 3} as well. Hence, the control \eqref{control the subtlety} would follow and our non-large choice of $N$ would happen to be ``large enough'' so that
\[(N-1)\sum_{i=1}^{n} F^{ii} - C\sum_{i=1}^{n} F^{ii} \geq 0.\] This complements Lu's argument in \cite{Lu-PAMS} and the proof therein can now be carried over to the degenerate cases $2k \leq n$.

Let us now proceed to show the curvature lower bound as claimed, by following Wang's argument in \cite{Bin-Adv}.

\subsection{The curvature lower bound}
\indent

We devote this subsection to proving the following claim.
\begin{lemma} \label{the claim}
When $2k \leq n$, we can find some constant $N=N(n,k)>1$ such that if the test function $Q$ attains its maximum at some interior $X_0 \in \Sigma$, then there exists some constant $C=C(n,k,\sigma,N)>0$ such that
\[\kappa_{n}(X_0) \geq -C.\]
\end{lemma}
\begin{proof}[Proof of Lemma \ref{the claim}]

If $\kappa_n(X_0) \geq 0$, then the bound is trivial. Assume $\kappa_n(X_0)<0$. The key to our reduction technique is to utilize the term
\begin{equation}
N\sum_{i=1}^{n} \frac{F^{ii}(\nabla_i\nu^{n+1})^2}{(\nu^{n+1})^2} \label{the particular term}
\end{equation} in the second line of \eqref{2nd order critical 5}.

For convenience, we recall the inequality \eqref{2nd order critical 5} here,
\begin{gather} \label{2nd order critical 6}
\begin{split}
0&\geq -\sum_{p \neq q} \frac{F^{pp,qq}h_{pp1}h_{qq1}}{\kappa_1} - \frac{F^{11}h_{111}^2}{\kappa_{1}^2}+2\sum_{i>m}\frac{F^{ii}h_{ii1}^2}{\kappa_1(\kappa_1-\kappa_i)} \\
&\quad +2\sum_{i>m}\frac{F^{ii}h_{11i}^2}{\kappa_1(\kappa_1-\kappa_i)}-\sum_{i>m}\frac{F^{ii}h_{11i}^2}{\kappa_{1}^2}+N\sum_{i=1}^{n} \frac{F^{ii}(\nabla_i\nu^{n+1})^2}{(\nu^{n+1})^2}\\
&\quad +(N-1)\left(\sum_{i=1}^{n} F^{ii} + \sum_{i=1}^{n} F^{ii}\kappa_{i}^2\right)-2N\sum_{i=1}^{n} F^{ii}\frac{u_i}{u}\frac{\nabla_i\nu^{n+1}}{\nu^{n+1}}\\
&\quad +C(n,k,\sigma)\kappa_1-C(n,k,\sigma) N.
\end{split}
\end{gather} For the first line, by Lemma \ref{the concavity inequality 1} and \eqref{1st order approximation}, there is some constant $\gamma_0=\gamma_0(n,k)>0$ such that
\[-\sum_{p \neq q} \frac{F^{pp,qq}h_{pp1}h_{qq1}}{\kappa_1} - \frac{F^{11}h_{111}^2}{\kappa_{1}^2}+2\sum_{i>m}\frac{F^{ii}h_{ii1}^2}{\kappa_1(\kappa_1-\kappa_i)} \geq (\gamma_0-1)\frac{F^{11}h_{111}^2}{\kappa_{1}^2}.\] Adding this to the $i=1$ summand of \eqref{the particular term} and invoking the first order critical condition \eqref{1st order critical}, we get
\begin{equation}
(\gamma_0-1)\frac{F^{11}h_{111}^2}{\kappa_{1}^2}+N\frac{F^{11}(\nabla_1\nu^{n+1})^2}{(\nu^{n+1})^2}=\left(\gamma_0-1+\frac{1}{N}\right)\frac{F^{11}h_{111}^2}{\kappa_{1}^2}. \label{the subtlety 4}
\end{equation}
\begin{remark}
The term \eqref{the particular term} was plausibly omitted in Lu's argument \cite{Lu-PAMS} because the coefficient $1/N$ would be too small when $N$ is large. In order to utilize this term, we will choose $N$ to be neither large nor small, but a modest value such that it would gauge everything perfectly.
\end{remark}
Indeed, let us first choose
\begin{equation}
1<N \leq \frac{1}{1-\gamma_0} \label{choice of N 1}
\end{equation} so that \eqref{the subtlety 4} is non-negative. For the second line in \eqref{2nd order critical 6}, we proceed similarly as follows. By Lemma \ref{sigma_k properties 2} and the first order critical condition \eqref{1st order critical},
\begin{gather} \label{the subtlety 5}
\begin{split}
&\ 2\sum_{i>m}\frac{F^{ii}h_{11i}^2}{\kappa_1(\kappa_1-\kappa_i)}-\sum_{i>m}\frac{F^{ii}h_{11i}^2}{\kappa_{1}^2}+N\sum_{i>m}^{n} \frac{F^{ii}(\nabla_i\nu^{n+1})^2}{(\nu^{n+1})^2} \\
=&\ \sum_{i>m} \frac{\kappa_1+\kappa_i}{\kappa_1-\kappa_i}\frac{F^{ii}h_{11i}^2}{\kappa_{1}^2}+N\sum_{i>m}^{n} \frac{F^{ii}(\nabla_i\nu^{n+1})^2}{(\nu^{n+1})^2}\\
\geq &\ \sum_{i>m} \left(\frac{2k-n}{n}+\frac{1}{N}\right) \frac{F^{ii}h_{11i}^2}{\kappa_{1}^2},
\end{split}
\end{gather} which can be made non-negative if we choose
\begin{equation}
\begin{cases}
1<N \leq \frac{n}{n-2k} &\quad \text{when $2k<n$},\\
\text{any} \quad 1<N     &\quad \text{when $2k=n$}.
\end{cases}
\label{choice of N 2}
\end{equation} Due to the allowable range of $\gamma_0$ in \eqref{gamma}, the choice \eqref{choice of N 1} already implies \eqref{choice of N 2} when $2k< n$. Hence, by choosing a suitable value for $N=N(n,k)>1$ that satisfies \eqref{choice of N 1}, it would follow that both \eqref{the subtlety 4} and \eqref{the subtlety 5} are non-negative. From \eqref{2nd order critical 6}, we are then left with
\begin{gather} \label{2nd order critical 7}
\begin{split}
0&\geq (N-1)\left(\sum_{i=1}^{n} F^{ii} + \sum_{i=1}^{n} F^{ii}\kappa_{i}^2\right)-2N\sum_{i=1}^{n} F^{ii}\frac{u_i}{u}\frac{\nabla_i\nu^{n+1}}{\nu^{n+1}}\\
&\quad +C(n,k,\sigma)\kappa_1-C(n,k,\sigma) N.
\end{split}
\end{gather}

For the trouble term \eqref{the subtlety 1}, we do the following computation,
\begin{align*}
&\ \frac{N-1}{2} \sum_{i=1}^{n} F^{ii}\kappa_{i}^2 -2N\sum_{i=1}^{n} F^{ii}\frac{u_i}{u}\frac{\nabla_i\nu^{n+1}}{\nu^{n+1}}\\
=&\ \frac{N-1}{2} \sum_{i=1}^{n} F^{ii}\kappa_{i}^2 +2N\sum_{i=1}^{n} F^{ii}\frac{u_{i}^2}{u^2}\frac{\kappa_i-\nu^{n+1}}{\nu^{n+1}} \\
\geq &\ \frac{N-1}{2} \sum_{\kappa_i<\nu^{n+1}} F^{ii}\kappa_{i}^2 +2N\sum_{\kappa_i<\nu^{n+1}} F^{ii}\frac{\kappa_i-\nu^{n+1}}{\nu^{n+1}} \\
\geq &\ \sum_{\kappa_i < \nu^{n+1}} F^{ii} \left[\frac{N-1}{2}\kappa_{i}^2 + \frac{2N}{\sigma}\kappa_i-\frac{2N}{\sigma}\right],
\end{align*}
where we have used \eqref{geometric formula} to get the following
\[\nabla_i\nu^{n+1}=\frac{u_{i}}{u}(\nu^{n+1}-\kappa_i), \quad \frac{u_{i}^2}{u^2} \leq \sum_{i=1}^{n} \frac{u_{i}^2}{u^2} \leq 1.\] The quadratic polynomial in the square bracket is non-negative if 
\[\kappa_{i} \leq -\frac{2N+2\sqrt{N[N+(N-1)\sigma]}}{(N-1)\sigma}=:-\eta(N,\sigma).\] In other words, we can reduce the sum as
\begin{align*}
&\ \frac{N-1}{2} \sum_{i=1}^{n} F^{ii}\kappa_{i}^2 -2N\sum_{i=1}^{n} F^{ii}\frac{u_i}{u}\frac{\nabla_i\nu^{n+1}}{\nu^{n+1}} \\
\geq &\ 2N\sum_{-\eta < \kappa_i<\nu^{n+1}} F^{ii}\frac{\kappa_i-\nu^{n+1}}{\nu^{n+1}}.
\end{align*} Looking back to \eqref{2nd order critical 7}, we now have that
\begin{gather} \label{2nd order critical 8}
\begin{split}
0& \geq \frac{N-1}{2}\sum_{i=1}^{n} F^{ii}\kappa_{i}^2 + 2N\sum_{-\eta < \kappa_i<\nu^{n+1}} F^{ii}\frac{\kappa_i-\nu^{n+1}}{\nu^{n+1}} \\
&\quad +C(n,k,\sigma) \kappa_1-C(n,k,\sigma) N \\
& \geq \frac{N-1}{2} F^{nn}\kappa_{n}^2 -\frac{2N}{\sigma}(\eta+1)\sum_{i=1}^{n} F^{ii}+C\kappa_1-CN.
\end{split}
\end{gather}
Since $\kappa_n<0$, we have that 
\[F^{nn} \geq C(n,k) \sum_{i=1}^{n} F^{ii} \quad \text{by Lemma \ref{sigma_k properties 2}}.\] Also, by Lemma \ref{sigma_k properties 1} and Lemma \ref{NM}, we have that
\[\sum_{i=1}^{n} F^{ii}=(n-k+1)\sigma_{k-1}(\kappa) \geq C(n,k) \sigma_{k}^{\frac{k-1}{k}} \geq C(n,k, \sigma).\] Hence, it follows from the inequality \eqref{2nd order critical 8} that
\[0 \geq \frac{N-1}{2} C(n,k)\sum_{i=1}^{n} F^{ii} \cdot \kappa_{n}^2 - \frac{2N}{\sigma}(\eta+1)\sum_{i=1}^{n} F^{ii} - CN\sum_{i=1}^{n} F^{ii}.\] Rearranging the terms and dividing by the positive sum $\sum_{i=1}^{n} F^{ii}$ would yield that
\[\kappa_{n}^2 \leq C(n,k,\sigma,N).\] The claim is now proved.

\end{proof}

\subsection{The proof continued}\label{the proof continued}
\indent

We now continue the proof of the curvature estimate. We restate the inequality \eqref{2nd order critical 5} here.
\begin{gather} \label{2nd order critical 9}
\begin{split}
0&\geq -\sum_{p \neq q} \frac{F^{pp,qq}h_{pp1}h_{qq1}}{\kappa_1} - \frac{F^{11}h_{111}^2}{\kappa_{1}^2}+2\sum_{i>m}\frac{F^{ii}h_{ii1}^2}{\kappa_1(\kappa_1-\kappa_i)} \\
&\quad +2\sum_{i>m}\frac{F^{ii}h_{11i}^2}{\kappa_1(\kappa_1-\kappa_i)}-\sum_{i>m}\frac{F^{ii}h_{11i}^2}{\kappa_{1}^2}+N\sum_{i=1}^{n} \frac{F^{ii}(\nabla_i\nu^{n+1})^2}{(\nu^{n+1})^2}\\
&\quad +(N-1)\left(\sum_{i=1}^{n} F^{ii} + \sum_{i=1}^{n} F^{ii}\kappa_{i}^2\right)-2N\sum_{i=1}^{n} F^{ii}\frac{u_i}{u}\frac{\nabla_i\nu^{n+1}}{\nu^{n+1}}\\
&\quad +C(n,k,\sigma)\kappa_1-C(n,k,\sigma) N.
\end{split}
\end{gather} 

With the curvature lower bound provided by Lemma \ref{the claim}, we can apply Lemma \ref{the concavity inequality 2} to get the following Ren-Wang type inequality,
\[-\sum_{p \neq q} \frac{F^{pp,qq}h_{pp1}h_{qq1}}{\kappa_1} - \frac{F^{11}h_{111}^2}{\kappa_{1}^2}+2\sum_{i>m}\frac{F^{ii}h_{ii1}^2}{\kappa_1(\kappa_1-\kappa_i)} \geq 0,\] so the first line in \eqref{2nd order critical 9} can be discarded.

We next claim that
\begin{lemma}\label{the claim 2}
If
\[\frac{3-\sqrt{5}}{2}<\frac{2k}{n} \leq 1,\] then there exists a choice of $N=N(n,k)>1$ that is compatible with \eqref{choice of N 1} and \eqref{choice of N 2} such that
\begin{align*}
&\ 2\sum_{i>m}\frac{F^{ii}h_{11i}^2}{\kappa_1(\kappa_1-\kappa_i)}-\sum_{i>m}\frac{F^{ii}h_{11i}^2}{\kappa_{1}^2}+N\sum_{i=1}^{n} \frac{F^{ii}(\nabla_i\nu^{n+1})^2}{(\nu^{n+1})^2}\\
&\ +(N-1)\sum_{i=1}^{n} F^{ii} -2N\sum_{i=1}^{n} F^{ii}\frac{u_i}{u}\frac{\nabla_i\nu^{n+1}}{\nu^{n+1}}\\
\geq &\ 0
\end{align*} as well.
\end{lemma}

\begin{remark}
This is the only place in our proof that does not hold for all $k$ with $2k \leq n$; otherwise our approach would be completely general.
\end{remark}

\begin{proof}[Proof of Lemma \ref{the claim 2}]
With the curvature lower bound proved in Lemma \ref{the claim}, we assume $\kappa_1$ is sufficiently large to get the following,
\begin{align*}
&\ 2\sum_{i>m}\frac{F^{ii}h_{11i}^2}{\kappa_1(\kappa_1-\kappa_i)}-\sum_{i>m}\frac{F^{ii}h_{11i}^2}{\kappa_{1}^2}\\
=&\ \sum_{i>m} \frac{\kappa_1+\kappa_i}{\kappa_1-\kappa_i} \frac{F^{ii}h_{11i}^2}{\kappa_{1}^2}\\
\geq &\ \sum_{i>m} \frac{\kappa_1-C}{\kappa_1+C} \frac{F^{ii}h_{11i}^2}{\kappa_{1}^2}\\
\geq &\ (1-\delta) \sum_{i>m} \frac{F^{ii}h_{11i}^2}{\kappa_{1}^2}
\end{align*} for some small $\delta>0$ to be determined. We then use this term to eliminate the trouble term \eqref{the subtlety 1}. Again, we assume $\kappa_1(X_0) >1 \geq \nu^{n+1}(X_0)$ is sufficiently large, then by the first order critical condition \eqref{1st order critical} and the formula
\[\nabla_i\nu^{n+1}=\frac{u_i}{u}(\nu^{n+1}-\kappa_i),\] we derive the following,
\begin{align}
&\ (1-\delta) \sum_{i>m} \frac{F^{ii}h_{11i}^2}{\kappa_{1}^2}+N\sum_{i=1}^{n} \frac{F^{ii}(\nabla_i\nu^{n+1})^2}{(\nu^{n+1})^2}\nonumber\\
&\quad -2N\sum_{i=1}^{n} F^{ii}\frac{u_i}{u}\frac{\nabla_i\nu^{n+1}}{\nu^{n+1}}\nonumber \\
\geq &\ (1-\delta)N^2\sum_{i>m} \frac{F^{ii}(\nabla_i\nu^{n+1})^2}{(\nu^{n+1})^2}+N\sum_{i>m} \frac{F^{ii}(\nabla_i\nu^{n+1})^2}{(\nu^{n+1})^2}\nonumber \\
&\quad -2N\sum_{i>m} F^{ii}\frac{u_i}{u}\frac{\nabla_i\nu^{n+1}}{\nu^{n+1}}\nonumber\\
=&\ [(1-\delta)N^2+N]\sum_{i>m} F^{ii}\frac{u_{i}^2}{u^2}\left(\frac{\kappa_i-\nu^{n+1}}{\nu^{n+1}}\right)^2\nonumber\\
&\quad +2N\sum_{i>m} F^{ii}\frac{u_{i}^2}{u^2}\frac{\kappa_i-\nu^{n+1}}{\nu^{n+1}}\nonumber \\
=&\ \sum_{i>m} F^{ii}\frac{u_{i}^2}{u^2} (a_0t^2+b_0t), \label{the key estimate}
\end{align}
where we have denoted
\[a_0=(1-\delta)N^2+N, \quad b_0=2N, \quad t=\frac{\kappa_i-\nu^{n+1}}{\nu^{n+1}}.\] In particular, we have
\begin{equation}
a_0t^2+b_0t > -c_0 \label{choice of c0 1}
\end{equation} for 
\begin{equation}
c_0>\frac{b_{0}^2}{4a_0}. \label{choice of c0 2}
\end{equation} In order to have that
\[(N-1)\sum_{i=1}^{n} F^{ii}-c_0\sum_{i=1}^{n} F^{ii} \geq 0,\] we would need that
\[N-1 \geq c_0.\] For such an $N$ to exist, the following must be valid,
\[N-1 > \frac{4N^2}{4[(1-\delta)N^2+N]}=\frac{1}{(1-\delta)+\frac{1}{N}}.\] For a small $\varepsilon>0$ to be determined, we can find some $\delta>0$ such that
\[\frac{1}{(1-\delta)+\frac{1}{N}}<\frac{1}{1+\frac{1}{N}}+\varepsilon.\] That is, it suffices to ensure that
\[N-1 \geq \frac{1}{1+\frac{1}{N}}+\varepsilon,\] which is equivalent to 
\[N^2-(1+\varepsilon)N-(1+\varepsilon) \geq 0.\] Since $N$ is positive, the non-negativity holds if 
\[N \geq \frac{(1+\varepsilon)+\sqrt{(1+\varepsilon)(5+\varepsilon)}}{2}.\] To ensure the compatibility with \eqref{choice of N 1}, we need to check whether
\[\frac{1}{1-\gamma_0} \geq \frac{(1+\varepsilon)+\sqrt{(1+\varepsilon)(5+\varepsilon)}}{2},\] where 
\[0<\gamma_0<\frac{2k}{n}\leq 1.\]

\begin{remark}
Since \eqref{choice of N 1} automatically implies \eqref{choice of N 2} when $2k \leq n$, it suffices to consider only the compatibility with \eqref{choice of N 1}.
\end{remark}

Solving the inequality would yield that
\[0<\varepsilon \leq \frac{3\gamma_0-\gamma_{0}^2-1}{(1-\gamma_0)(2-\gamma_0)}.\] For such an $\varepsilon$ to exist, we need
\[3\gamma_0-\gamma_{0}^2-1>0,\] which is equivalent to
\[\gamma_0>\frac{3-\sqrt{5}}{2}.\] According to the allowable range of $\gamma_0$, in order for such a $\gamma_0$ to exist, we would need that
\[\frac{2k}{n}>\frac{3-\sqrt{5}}{2}.\] This imposes the dimensional restriction stated in the hypothesis of Lemma \ref{the claim 2} and the claim is proved.

\end{proof}

Therefore, by Lemma \ref{the claim} and Lemma \ref{the claim 2}, there only remains in \eqref{2nd order critical 9} that
\[0 \geq C\kappa_1-CN\] and the desired estimate readily follows.

The proof of the curvature estimate is now complete.

\end{proof}

\section{The curvature estimate for $2k>n$} \label{2k>n proof}
In this section, we prove the curvature estimate when $2k>n$. Theorem \ref{the theorem} in the same case would follow from the curvature estimate as a consequence of Guan-Spruck's general framework \cite{JEMS}.

The proof of the curvature estimate when $2k>n$, as we have explained in the Introduction and also in Section \ref{the difficulties}, will follow the argument of Lu in \cite{Lu-PAMS}. Moreover, we may consider only $k \leq n-1$ because the case $k=n$ has been treated in the locally strictly convex theory \cite{SGAR,JDG,Rosenberg-Spruck, JGA, Xiao-A}.
\begin{theorem}
Let $n \geq 3$ and $2 \leq k \leq n-1$ be such that $2k > n$. Suppose $\Omega \subseteq \bR^n$ is a smooth bounded domain with a mean-convex boundary and $\sigma \in (0,1)$. For $f=H_{k}^{1/k}$ and $\Gamma=\Gamma_{k}$ in \eqref{req1}-\eqref{req2}, let $u$ be a smooth solution of the approximate Dirichlet problem \eqref{the approximate problem} whose vertical graph $\Sigma=(x,u(x))$ over the domain $\Omega$ is $k$-admissible. Then its largest principal curvature
\[\kappa_{\max}(x):=\max_{1 \leq i \leq n} \kappa_i(x)\] satisfies
\[\max_{\Omega} \kappa_{\max} \leq C \left(1+ \max_{\partial \Omega}\kappa_{\max}\right)\] for some universal constant $C>0$ that depends only on $n$, $k$ and $\sigma$, but is independent of $\varepsilon$.
\end{theorem}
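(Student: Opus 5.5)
We follow Lu's argument \cite{Lu-PAMS} and use the same maximum-principle setup as in Section \ref{2k<=n proof}. Take the test function $Q=\kappa_{\max}/(\nu^{n+1})^N$, now with $N$ large, and recall $\nu^{n+1}\geq\sigma$ by mean-convexity \cite[Proposition 4.1]{JEMS}. Suppose $Q$ attains an interior maximum at $X_0$. Using the Brendle--Choi--Daskalopoulos approximation exactly as before, we reach inequality \eqref{2nd order critical 5}; its derivation never used $2k\leq n$. We may also assume $\kappa_1(X_0)$ is large, since otherwise there is nothing to prove.

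The first step treats the first line of \eqref{2nd order critical 5}. Since $2k>n$, we have $\min\{2k/n,\,1+\frac{2k-n}{2k^2+n}\}>1$, so Lemma \ref{the concavity inequality 1} applies with $\gamma_0=1$. Take $\xi_i=h_{ii1}$; this is admissible because $h_{ii1}=0$ for $1<i\leq m$ by \eqref{1st order approximation}. Combined with \eqref{differentiate once}, which states $\sum F^{ii}h_{ii1}=0$, the right-hand side of \eqref{concavity 1} vanishes and we obtain \eqref{concavity 3}. The first line can therefore be discarded, and there is no $N^2F^{11}\kappa_1^2$ obstruction.

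The second step treats the $h_{11i}$ terms. Lemma \ref{sigma_k properties 2} gives $\kappa_i>-\frac{n-k}{k}\kappa_1$ for each $i$. Hence
\[\frac{\kappa_1+\kappa_i}{\kappa_1-\kappa_i}\geq \frac{1-\frac{n-k}{k}}{1+\frac{n-k}{k}}=\frac{2k-n}{n}=:\theta>0,\]
which is \eqref{the subtlety 3}. Now use \eqref{1st order critical} to write $h_{11i}=\kappa_1N\nabla_i\nu^{n+1}/\nu^{n+1}$, and use $\nabla_i\nu^{n+1}=\frac{u_i}{u}(\nu^{n+1}-\kappa_i)$ with $t=(\kappa_i-\nu^{n+1})/\nu^{n+1}$. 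For $i>m$, the sum of the second-line terms with the trouble term \eqref{the subtlety 1} is then bounded below by
\[\sum_{i>m}F^{ii}\frac{u_i^2}{u^2}\big((\theta N^2+N)t^2+2Nt\big)\geq -\frac{N^2}{\theta N^2+N}\sum F^{ii}\geq -\frac{1}{\theta}\sum F^{ii}.\]
The indices $i\leq m$ remain. For them $\kappa_i=\kappa_1$ is large and $t>0$, so the trouble term is positive after the sign flip. This gives the control \eqref{control the subtlety} with a constant $C=1/\theta$ that does not depend on $N$.

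The third step chooses $N>1+\frac{n}{2k-n}$. Then $(N-1)\sum F^{ii}-\frac1\theta\sum F^{ii}\geq0$, and the term $(N-1)\sum F^{ii}\kappa_i^2\geq0$ can be dropped. What remains of \eqref{2nd order critical 5} is $0\geq C(n,k,\sigma)\kappa_1-C(n,k,\sigma)N$. This bounds $\kappa_1(X_0)$, and therefore $Q$, in terms of $n,k,\sigma$. If instead the maximum of $Q$ is attained on $\partial\Omega$, the bound comes from $\max_{\partial\Omega}\kappa_{\max}$. In both cases $\nu^{n+1}\in[\sigma,1]$, so we conclude $\max\kappa_{\max}\leq C(1+\max_{\partial\Omega}\kappa_{\max})$ with $C$ independent of $\varepsilon$.

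The main obstacle is the bookkeeping in the first step. We have to check that Yan's inequality indeed holds with $\gamma_0=1$, which needs the strict inequality in \eqref{gamma}, and that the multiplicity constraint $\xi_2=\dots=\xi_m=0$ is satisfied. We also have to check that the constant $\eta_*$ is compatible with assuming $\kappa_1$ large.
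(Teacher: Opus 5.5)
Your proposal is correct and follows the paper's proof essentially step for step. You use Yan's inequality with $\gamma_0=1$ (admissible since $2k>n$) together with $\sum_i F^{ii}h_{ii1}=0$ to discard the first line, then $\frac{\kappa_1+\kappa_i}{\kappa_1-\kappa_i}\geq\frac{2k-n}{n}$ and the critical-point identity to reduce the remaining terms to a quadratic in $t$ bounded below by $-C\sum_i F^{ii}$, absorbed by a large fixed $N$. The differences are cosmetic: you keep the $N\sum_i F^{ii}(\nabla_i\nu^{n+1})^2/(\nu^{n+1})^2$ term and the sharp $\theta$, giving $N>1+\frac{n}{2k-n}$, whereas the paper uses $\theta\geq 1/n$ and takes $N=2n+1$.
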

\begin{proof}
As in section \ref{2k<=n proof}, we still consider the test function
\[Q=\frac{\kappa_{\max}}{(\nu^{n+1})^N}\] but this time we will choose a large $N>0$. Proceeding exactly as in section \ref{2k<=n proof}, we would also arrive at \eqref{2nd order critical 5} which we restate here,
\begin{gather} \label{2nd order critical 10}
\begin{split}
0&\geq -\sum_{p \neq q} \frac{F^{pp,qq}h_{pp1}h_{qq1}}{\kappa_1} - \frac{F^{11}h_{111}^2}{\kappa_{1}^2}+2\sum_{i>m}\frac{F^{ii}h_{ii1}^2}{\kappa_1(\kappa_1-\kappa_i)} \\
&\quad +2\sum_{i>m}\frac{F^{ii}h_{11i}^2}{\kappa_1(\kappa_1-\kappa_i)}-\sum_{i>m}\frac{F^{ii}h_{11i}^2}{\kappa_{1}^2}+N\sum_{i=1}^{n} \frac{F^{ii}(\nabla_i\nu^{n+1})^2}{(\nu^{n+1})^2}\\
&\quad +(N-1)\left(\sum_{i=1}^{n} F^{ii} + \sum_{i=1}^{n} F^{ii}\kappa_{i}^2\right)-2N\sum_{i=1}^{n} F^{ii}\frac{u_i}{u}\frac{\nabla_i\nu^{n+1}}{\nu^{n+1}}\\
&\quad +C(n,k,\sigma)\kappa_1-C(n,k,\sigma) N.
\end{split}
\end{gather} 
There are two negative trouble terms,
\[- \frac{F^{11}h_{111}^2}{\kappa_{1}^2} \quad \text{and} \quad -2N\sum_{i=1}^{n} F^{ii}\frac{u_i}{u}\frac{\nabla_i\nu^{n+1}}{\nu^{n+1}}.\] 

For the first trouble term, since $2k>n$, it follows directly from \eqref{concavity 1} and \eqref{differentiate once} that
\[-\sum_{p \neq q} \frac{F^{pp,qq}h_{pp1}h_{qq1}}{\kappa_1} - \frac{F^{11}h_{111}^2}{\kappa_{1}^2}+2\sum_{i>m}\frac{F^{ii}h_{ii1}^2}{\kappa_1(\kappa_1-\kappa_i)} \geq 0.\]

For the second trouble term, we assume $\kappa_1(X_0)$ is sufficiently large, and use the first order critical condition \eqref{1st order critical} and Lemma \ref{sigma_k properties 2} to derive that
\begin{align*}
&\ 2\sum_{i>m}\frac{F^{ii}h_{11i}^2}{\kappa_1(\kappa_1-\kappa_i)}-\sum_{i>m}\frac{F^{ii}h_{11i}^2}{\kappa_{1}^2}-2N\sum_{i=1}^{n} F^{ii}\frac{u_i}{u}\frac{\nabla_i\nu^{n+1}}{\nu^{n+1}}\\
\geq &\ \sum_{i>m} \frac{\kappa_1+\kappa_i}{\kappa_1-\kappa_i} \frac{F^{ii}h_{11i}^2}{\kappa_{1}^2}-2N\sum_{i>m} F^{ii}\frac{u_i}{u}\frac{\nabla_i\nu^{n+1}}{\nu^{n+1}} \\
\geq &\ \frac{2k-n}{n}\sum_{i>m} F^{ii} N^2\left(\frac{\nabla_i\nu^{n+1}}{\nu^{n+1}}\right)^2-2N\sum_{i>m} F^{ii}\frac{u_i}{u}\frac{\nabla_i\nu^{n+1}}{\nu^{n+1}}\\
\geq &\ \frac{N^2}{n}\sum_{i>m} F^{ii}\frac{u_{i}^2}{u^2}\left(\frac{\kappa_i-\nu^{n+1}}{\nu^{n+1}}\right)^2+2 N\sum_{i>m} F^{ii}\frac{u_{i}^2}{u^2}\frac{\kappa_i-\nu^{n+1}}{\nu^{n+1}}\\
\geq &\ -2n\sum_{i>m} F^{ii}.
\end{align*}

Therefore, by choosing $N$ large enough, e.g., $N=2n+1$, we have that
\[(N-1)\sum_{i=1}^{n} F^{ii}+\frac{1}{n}\sum_{i>m} F^{ii}\frac{h_{11i}^2}{\kappa_{1}^2} - 2 N\sum_{i=1}^{n} F^{ii}\frac{u_i}{u}\frac{\nabla_i \nu^{n+1}}{\nu^{n+1}} \geq 0.\] Hence, from \eqref{2nd order critical 10}, we are left with
\[0 \geq C(n, k, \sigma) \kappa_1-C(n,k,\sigma) N\] and the desired estimate readily follows.

The proof of curvature estimates for $2k > n$ is now complete.
\end{proof}

\begin{remark}
The concavity inequality for the $\sigma_k$ operator may have several other applications. In \cite{Bin-rigidity}, we have applied the concavity inequality to obtain a Liouville type rigidity result for entire solutions of the $k$-Hessian equation when $2k>n$.
\end{remark}

\bibliography{refs}

\appendix

\section{An extension of Lemma \ref{the claim 2} to all $2k \leq n$}

In Section \ref{2k<=n proof}, we established Lemma \ref{the claim 2} under the assumption that
\[\frac{3-\sqrt{5}}{2}<\frac{2k}{n} \leq 1.\] This is the only restriction that prevents Theorem \ref{the theorem} from holding for all $k$.

In \cite{Mei-Yan}, through a different set of arguments, Mei and Yan were able to obtain the curvature estimate \eqref{global C2 curvature} (and hence Theorem \ref{the theorem}) for all $2k \leq n$, without the restriction
\begin{equation}
\frac{3-\sqrt{5}}{2}<\frac{2k}{n}. \label{the restriction 2}
\end{equation}

Being inspired by one step of their argument (which is exactly \eqref{the observation} below), we are now also able to remove the restriction \eqref{the restriction 2} from Lemma \ref{the claim 2} and hence from Theorem \ref{the theorem}. In this appendix, we include this refinement for the potentially interested reader, as our proof method may stay helpful to similar problems.

\begin{proof}[A refinement for Lemma \ref{the claim 2}]
Recall the inequality \eqref{the key estimate} which we restate here,
\begin{align*}
&\ (1-\delta) \sum_{i>m} \frac{F^{ii}h_{11i}^2}{\kappa_{1}^2}+N\sum_{i=1}^{n} \frac{F^{ii}(\nabla_i\nu^{n+1})^2}{(\nu^{n+1})^2}-2N\sum_{i=1}^{n} F^{ii}\frac{u_i}{u}\frac{\nabla_i\nu^{n+1}}{\nu^{n+1}} \\
\geq &\ \sum_{i>m} F^{ii}\frac{u_{i}^2}{u^2}(a_0t^2+b_0t),
\end{align*} where
\[a_0=(1-\delta)N^2+N,\quad b_0=2N, \quad t=\frac{\kappa_i-\nu^{n+1}}{\nu^{n+1}}.\]

The original estimate 
\[\sum_{i>m} F^{ii}\frac{u_{i}^2}{u^2}(a_0t^2+b_0t) \geq -c_0\sum_{i=1}^{n} F^{ii}\] in Section \ref{2k<=n proof} was coarse.
The key is to get a better estimate for this sum, so that we can have
\begin{align*}
&\ 2\sum_{i>m}\frac{F^{ii}h_{11i}^2}{\kappa_1(\kappa_1-\kappa_i)}-\sum_{i>m}\frac{F^{ii}h_{11i}^2}{\kappa_{1}^2}+N\sum_{i=1}^{n} \frac{F^{ii}(\nabla_i\nu^{n+1})^2}{(\nu^{n+1})^2}\\
&\ +(N-1)\sum_{i=1}^{n} F^{ii} -2N\sum_{i=1}^{n} F^{ii}\frac{u_i}{u}\frac{\nabla_i\nu^{n+1}}{\nu^{n+1}}\\
\geq &\ 0.
\end{align*}

Indeed, for each $i$, the quadratic polynomial
\[a_0t^2+b_0t=t(a_0t+b_0)\] is non-negative if $\kappa_i \geq \nu^{n+1}$ or 
\[\kappa_i<\nu^{n+1} \quad \text{and} \quad a_0t+b_0 \leq 0.\] The latter condition corresponds to
\[\kappa_i \leq \nu^{n+1} \left(1-\frac{b_0}{a_0}\right).\] In other words, it remains to deal with the sum
\[\sum_{i \in I} F^{ii}\frac{u_{i}^2}{u^2} (a_0t^2+b_0t),\] where
\[I:=\left\{i>m: \nu^{n+1}\left(1-\frac{b_0}{a_0}\right)<\kappa_i<\nu^{n+1}\right\}.\] By the gradient estimate in \cite[Proposition 4.1]{JEMS}, we know that $\nu^{n+1} \geq \sigma$. On the other hand, 
\[1-\frac{b_0}{a_0}=1-\frac{2N}{(1-\delta)N^2+N}=\frac{(1-\delta)N-1}{(1-\delta)N+1} \geq 0\] if
\begin{equation}
N \geq \frac{1}{1-\delta}. \label{choice of N 3}
\end{equation} It follows that, for $i \in I$, we have
\[\kappa_i>\nu^{n+1} \left(1-\frac{b_0}{a_0}\right) \geq 0.\] By Lemma \ref{sigma_k properties 1}, we have that
\begin{equation}
F^{ii}=\sigma_{k}^{ii} \leq \sigma_{k-1}=\frac{1}{n-k+1}\sum_{j=1}^{n}F^{jj} \quad \text{for $i \in I$}. \label{the observation}
\end{equation} With this extra factor
\begin{equation}
\frac{1}{n-k+1} \label{the extra factor}
\end{equation} which was not considered by us in the proof of Lemma \ref{the claim 2}, we can get the following better estimate,
\begin{align*}
&\ \sum_{i \in I} F^{ii}\frac{u_{i}^2}{u^2} (a_0t^2+b_0t)\\
\geq &\ -c_0 \sum_{i \in I} F^{ii} \frac{u_{i}^2}{u^2} \quad \text{by \eqref{choice of c0 1}-\eqref{choice of c0 2}} \\
\geq &\ -\frac{c_0}{n-k+1}\sum_{j=1}^{n} F^{jj} \cdot \sum_{i \in I} \frac{u_{i}^2}{u^2} \\
\geq &\ -\frac{c_0}{n-k+1}\sum_{i=1}^{n} F^{ii},
\end{align*} where we used
\[\sum_{i \in I} \frac{u_{i}^2}{u^2} \leq \sum_{i=1}^{n} \frac{u_{i}^2}{u^2}=|\tilde{\nabla}u|^2=1-(\nu^{n+1})^2 \leq 1.\]

Now, we want that
\[(N-1)\sum_{i=1}^{n} F^{ii} - \frac{c_0}{n-k+1}\sum_{i=1}^{n} F^{ii} \geq 0,\] that is, we need
\begin{equation}
N - 1 \geq \frac{c_0}{n-k+1} > \frac{b_{0}^2}{4a_0} \frac{1}{n-k+1} = \frac{1}{(1-\delta)+\frac{1}{N}}\frac{1}{n-k+1} \label{choice of N 4}
\end{equation} by the choice \eqref{choice of c0 2} of $c_0$, rather than the coarser inequality
\[N-1 > \frac{1}{(1-\delta)+\frac{1}{N}}\] in Section \ref{2k<=n proof}.

Finally, recall that the choice \eqref{choice of N 1} implies \eqref{choice of N 2}. Therefore, by solving \eqref{choice of N 1}, \eqref{choice of N 3}, \eqref{choice of N 4}, and incorporating the range \eqref{gamma} of $\gamma_0$, we obtain that,
\[0<\gamma_0<\frac{2k}{n}, \quad N_{0}<N \leq \frac{1}{1-\gamma_0},\] and
\[0<\delta<\min\left\{1-\frac{1}{N},\quad 1+ \frac{1}{N}-\frac{1}{(n-k+1)(N-1)}\right\},\] where
\[N_{0}:=\frac{1+\sqrt{1+4(n-k+1)^2}}{2(n-k+1)}<1+\frac{1}{n-k+1}.\]

For convenience, if we choose
\[\gamma_0=\frac{k}{n},\] then
\[N=\frac{1}{1-\gamma_0}=\frac{n}{n-k}>1+\frac{1}{n-k+1}\] would be a good choice satisfying all the requirements. Accordingly, a good choice for $\delta$ would be 
\[\delta=\frac{k}{2n}.\]

This proves the following refined version of Lemma \ref{the claim 2}, which we state together with Lemma \ref{the claim}.

\begin{lemma}\label{the refined claim}
When $2k \leq n$, there exists some choice of $N=N(n,k)>1$ such that if the test function $Q$ attains its maximum at some interior point $X_0$, then 
\[\kappa_{n}(X_0) \geq -C\] for some $C=C(n,k,\sigma,N)>0$ and if 
\[\kappa_1(X_0) \geq C(n,k,\sigma,N)\] is sufficiently large, then
\begin{align*}
&\ 2\sum_{i>m}\frac{F^{ii}h_{11i}^2}{\kappa_1(\kappa_1-\kappa_i)}-\sum_{i>m}\frac{F^{ii}h_{11i}^2}{\kappa_{1}^2}+N\sum_{i=1}^{n} \frac{F^{ii}(\nabla_i\nu^{n+1})^2}{(\nu^{n+1})^2}\\
&\ +(N-1)\sum_{i=1}^{n} F^{ii} -2N\sum_{i=1}^{n} F^{ii}\frac{u_i}{u}\frac{\nabla_i\nu^{n+1}}{\nu^{n+1}}\\
\geq &\ 0
\end{align*} at $X_0$.
\end{lemma}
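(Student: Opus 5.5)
Lemma \ref{the refined claim} packages two facts, and I will prove them in order with one parameter choice. I fix $\gamma_0=k/n$, which is admissible in \eqref{gamma} since $k/n<2k/n$ and $k/n<1\le 1+\frac{2k-n}{2k^2+n}$ fails only if $2k<n$, in which case $k/n<2k/n\le$ the minimum anyway. I then set $N=\frac{n}{n-k}=\frac{1}{1-\gamma_0}$ and $\delta=\frac{k}{2n}$. With this $N$ the equality case of \eqref{choice of N 1} holds, and by the remark after \eqref{choice of N 2} so does \eqref{choice of N 2}.

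\textbf{First assertion.} The proof of Lemma \ref{the claim} goes through verbatim. The coefficients $\gamma_0-1+\frac1N$ in \eqref{the subtlety 4} and $\frac{2k-n}{n}+\frac1N$ in \eqref{the subtlety 5} are non-negative. The remaining quadratic-in-$\kappa_i$ argument then gives $\kappa_n(X_0)\ge -C(n,k,\sigma,N)$.

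\textbf{Second assertion.} I follow the refinement just carried out above.

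First, using $\kappa_n\ge -C$ and $\kappa_1$ large, I get $\frac{\kappa_1+\kappa_i}{\kappa_1-\kappa_i}\ge 1-\delta$. This bounds the first two terms below by $(1-\delta)\sum_{i>m}F^{ii}h_{11i}^2/\kappa_1^2$.

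Next, \eqref{1st order critical} gives $h_{11i}=N\kappa_1\nabla_i\nu^{n+1}/\nu^{n+1}$. Together with $\nabla_i\nu^{n+1}=\frac{u_i}{u}(\nu^{n+1}-\kappa_i)$, this reduces the $i>m$ part to $\sum_{i>m}F^{ii}\frac{u_i^2}{u^2}(a_0t^2+b_0t)$, exactly as in \eqref{the key estimate}.

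For $i\le m$ I need the extra terms $N F^{ii}(\nabla_i\nu^{n+1})^2/(\nu^{n+1})^2-2NF^{ii}\frac{u_i}{u}\frac{\nabla_i\nu^{n+1}}{\nu^{n+1}}$ to be harmless. For $1<i\le m$, $\nabla_i\nu^{n+1}=0$ by \eqref{approximation 2}. For $i=1$, $\kappa_1>\nu^{n+1}$ makes the cross term $2NF^{11}\frac{u_1^2}{u^2}\frac{\kappa_1-\nu^{n+1}}{\nu^{n+1}}\ge0$.

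Then I split the indices $i>m$. The quadratic $t(a_0t+b_0)$ is $\ge0$ unless $i\in I$. For $i\in I$, the check $N\ge 1/(1-\delta)$ gives $\kappa_i>0$, so \eqref{the observation} yields $F^{ii}\le \frac{1}{n-k+1}\sum_j F^{jj}$. Using $\sum_i u_i^2/u^2\le1$, the negative part is then at least $-\frac{c_0}{n-k+1}\sum F^{ii}$, and $(N-1)\sum F^{ii}$ absorbs it provided \eqref{choice of N 4} holds.

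\textbf{Main obstacle: the arithmetic compatibility.} I must verify three inequalities.
\begin{itemize}
\item[(a)] $N\ge\frac1{1-\delta}$, i.e. $\frac{n}{n-k}\ge\frac{2n}{2n-k}$. This is true.
\item[(b)] $N\le\frac1{1-\gamma_0}$. This holds with equality.
\item[(c)] $N-1=\frac{k}{n-k}>\frac{1}{(1-\delta+\frac1N)(n-k+1)}$. Since $1-\delta+\frac1N\ge 1-\frac{k}{2n}+\frac{n-k}{n}>1$ when $2k\le n$, it suffices that $k(n-k+1)\ge n-k$. This holds for $k\ge1$, because $k(n-k+1)-(n-k)=(k-1)(n-k)+k>0$.
\end{itemize}
Once these are confirmed, $c_0$ is chosen strictly between $\frac{b_0^2}{4a_0}$ and $(N-1)(n-k+1)$, and the lemma follows.
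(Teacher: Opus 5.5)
Your proposal is correct and follows the paper's appendix argument essentially step by step. It uses the same choices $\gamma_0=k/n$, $N=n/(n-k)$, $\delta=k/(2n)$, and the same key bound $F^{ii}\le\frac{1}{n-k+1}\sum_jF^{jj}$ on the set $I$, where $\kappa_i>0$; your explicit handling of the $i\le m$ terms and your check of (c) via $1-\delta+\frac1N>1$ just spell out what the paper leaves implicit.
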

\end{proof}

\begin{remark}
In our original proof in Section \ref{2k<=n proof}, we failed to observe the crucial, extra factor \eqref{the extra factor}.
\end{remark}
\begin{remark}
With Lemma \ref{the refined claim}, the curvature estimate \eqref{global C2 curvature} follows for all $k$ with $2k \leq n$ by the argument in Section \ref{the proof continued}. Combining this with the curvature estimate for $2k>n$ proved in Section \ref{2k>n proof}, Theorem \ref{the theorem} is now valid for all $k$.
\end{remark}

\begin{remark}
Incorporating the above refinement, the arguments in Section \ref{2k<=n proof} and Section \ref{2k>n proof} can be re-organized into one single proof that works for all $2 \leq k \leq n-1$. We nevertheless retain the original organization of the paper and present the refinement separately in this appendix.
\end{remark}
\end{document}